\patchcmd{\@tocline}{\hfil}
{\nobreak\leaders\hbox{\ifnum#1<2\hfill\else$\m@th%
\mkern 4.5 mu\hbox{.}\mkern 4.5 mu$\fi}\hfill\nobreak}{}{}
\def\l@section{\@tocline{1}{10pt}{1pc}{}{\bfseries}}
\def\l@subsection{\@tocline{2}{0pt}{\dimexpr 1pc+2em}{}{}}
\newtheorem{theorem}{Theorem}
\newtheorem{proposition}[theorem]{Proposition}
\newtheorem{lemma}[theorem]{Lemma}
\newtheorem{remark}[theorem]{Remark}
\theoremstyle{plain}
\newtheorem*{theorem*}{Theorem}
\newcommand{\del}{\partial}
\newcommand{\eps}{\varepsilon}
\newcommand{\cD}{\mathcal{D}}
\newcommand{\R}{\mathbb{R}}
\begin{document}

\title[Dispersive Shocks]{Dispersive shocks in diffusive-dispersive approximations of elasticity and quantum-hydrodynamics}

\author[D. Bolbot]{Daria Bolbot}
\address[Daria Bolbot]{
\newline
Computer, Electrical and Mathematical Science and Engineering Division 
\newline
King Abdullah University of Science and Technology (KAUST)
\newline 
Thuwal 23955-6900,  Saudi Arabia
}
\email{daria.bolbot@kaust.edu.sa}
\author[D. Mitsotakis]{Dimitrios Mitsotakis}
\address[Dimitrios Mitsotakis]{\newline
Victoria University of Wellington
\newline School of Mathematics and Statistics
\newline
PO Box 600
\newline
Wellington 6140
\newline 
New Zealand
}
\email{dimitrios.mitsotakis@vuw.ac.nz}
\author[A.E. Tzavaras]{Athanasios E. Tzavaras}
\address[Athanasios E. Tzavaras]{
\newline
Computer, Electrical and Mathematical Science and Engineering Division 
\newline
King Abdullah University of Science and Technology (KAUST)
\newline 
Thuwal 23955-6900,  Saudi Arabia
}
\email{athanasios.tzavaras@kaust.edu.sa}

\dedicatory{To Constantine Dafermos, who keeps inspiring us, with friendship and admiration}

\begin{abstract}
The aim is to assess the combined effect of diffusion and dispersion on shocks in the moderate dispersion regime.
For a diffusive dispersive approximation of the equations of one-dimensional elasticity (or p-system), we study convergence 
of traveling waves to shocks. The problem is recast as a Hamiltonian system with small friction, and an analysis of the length
of oscillations yields convergence in the moderate dispersion regime $\eps, \delta \to 0$ with $\delta = o(\eps)$, under hypotheses
that the limiting shock is admissible according to the Liu E-condition and is not a contact  discontinuity at either end state. 
A similar convergence result is proved for traveling waves of the quantum hydrodynamic system with artificial viscosity 
as well as for a viscous Peregrine-Boussinesq system where traveling waves model undular bores, in all cases in the moderate dispersion regime. 
\end{abstract}

\maketitle


\section{Introduction}


Systems exhibiting interplay of diffusion, dispersion and nonlinear response have been extensively studied in the field of conservation laws,
 starting  with works on the subject of phase transitions and  undercompressive shocks, {{\it e.g.} \cite{slemrod1983,SS1987,JMS1995,BL2001,BL2002}.
An alternative perspective arose from the field of nonlinear dispersive equations with the objective to
study dispersive or dissipative-dispersive shocks \cite{BS1985,EHS2017}. Similar problems appear in a variety of fluid mechanics settings, like shallow water  flows \cite{Chanson2009,DHM2018}, undular waves in atmospheric flows or water waves \cite{peregrine1967,BMT2022}.
Peregrine \cite{Pere1966} introduced weakly nonlinear and dispersive wave equations in order to study undular bores, a wave appearing 
in rivers, atmospheric flows and also in blood vessels composed of a solitary wave followed by undulations. 

The Burgers-Korteweg de Vries (KdV) equation
\begin{equation}\label{eq:vbKdV}
u_t +  f(u)_x = \eps u_{xx} -  \delta u_{xxx}\ ,
\end{equation}
has been a testing ground  for assessing the interplay of diffusion, dispersion and nonlinearity. Various perspectives of study exist:
(a) to assess the effect of dispersion in the KdV or modified KdV equation ($\eps =0$) on expanding wavetrain solutions connecting two constant states
 arising via Whitham modulation theory and termed in the field of dispersive equations as dispersive shock waves; (b) to assess the limiting behavior
of traveling wave solutions when both diffusion and dispersion are present. We  refer to \cite{EHS2017} for an in depth presentation of these viewpoints 
and their relation.
Here, we focus on a specific aspect of (b),  relevant from the viewpoint of systems of conservation laws,  namely how oscillatory traveling waves 
for diffusive-dispersive systems of two conservation laws approach shocks in the limit $\eps, \delta \to 0$.

Traveling wave solutions have been a focal point for assessing the interplay of diffusive-dispersive systems with studies for the Burgers-KdV equation
with $f(u) = u^2$ \cite{BS1985}, the modified Burgers-KdV equation with $f(u) = u^3$ \cite{JMS1995}, diffusive-dispersive approximations of elasticity
 \cite{HS1983,B1987,BL2001} or hyperbolic-elliptic models for phase transitions \cite{slemrod1983,BL2002}. Comprehensive presentations 
 can be found in \cite{lef2002,FS2002}. 
The related convergence results from shock profiles to shock waves generally hold in the weak dispersion regime $\delta = O(\eps^2)$. 
Based on such studies and related convergence results from \eqref{eq:vbKdV} to the inviscid Burgers equation  \cite{Schonbek1982,HT2002}
it was believed for a while that $\delta = O(\eps^2)$ might be the threshold for convergence to Kruzhkov entropy solutions. 
This was refuted in \cite{PR2007} where traveling wave solutions for genuinely nonlinear Burgers-KdV equations were shown to converge to shocks in the range $\delta = o(\eps)$. 
In that range traveling waves present relatively strong oscillatory behavior and dispersive effects are significant,
hence it was termed moderate dispersion regime.


The aim here is to examine  the convergence of diffusive-dispersive traveling waves 
for systems of conservation laws in the moderate dispersion regime.
We note that the use of genuine nonlinearity is avoided and replaced by the Liu shock admissibility condition and 
a requirement that the shock is not a (right or left) contact discontinuity.
The analysis is developed for the system of elasticity and extended to other situations where diffusive-dispersive effects play a role: 
the quantum hydrodynamic system with diffusion and to diffusive-dispersive models modeling undular bores.

The specific cases analyzed are the following:
We first consider a diffusive-dispersive approximation of the elasticity system
\begin{equation}
\label{EDDI}
\begin{aligned}
u_t &=v_x\ ,   \\
 v_t &=\sigma(u)_x+\eps v_{xx}-\delta u_{xxx}\ , 
\end{aligned}
\end{equation}
where  $(t,x)\in\mathbb{R_+}\times\mathbb{R}$ and $\eps, \delta > 0$. The hyperbolic part of \eqref{EDDI} is known as the p-system and is expressed in Lagrangian coordinates.
The hyperbolicity condition $\sigma'(u) > 0$ is employed throughout  but use of genuine nonlinearity is avoided.
Existence for traveling waves and convergence to shocks in the range $\delta = O(\eps^2)$ appear in \cite{HS1983,B1987,BL2001} 
using a dynamical systems approach. The convergence from traveling waves to shock waves is extended here to the moderate dispersion regime
 $\delta = o(\eps)$  -- as contrasted to the weak dispersion regime  $\delta = O(\eps^2)$ --
for a shock satisfying the Liu E-condition and avoiding contact discontinuities at the end states.
This convergence covers the regime of moderate dispersion where oscillations have a significant presence.
Our analysis does not cover  undercompressive shocks or non-monotone stress-strain relations appearing in phase transitions 
or Van der Waals fluids;  we refer to \cite{FS2002,lef2002} and  references therein for reviews of those subjects.

Next, consider the Quantum hydrodynamics system with artificial viscosity
\begin{equation}
\label{eq:QHDDDI}
\begin{aligned}
\rho_t+j_x &=0\ ,   \\
 j_t+ \left (\frac{j^2}{\rho}+\rho^\gamma \right)_x &=\eps j_{xx}+\delta\rho \left(\frac{\sqrt{\rho}_{xx}}{\sqrt{\rho}} \right)_x \ .
\end{aligned}
\end{equation}
Here, $\rho=\rho(t,x)>0$ denotes the density, $j=j(t,x)$ momentum, $(t,x) \in\mathbb{R_+}\times\mathbb{R}$ while
$\rho^\gamma$ stands for the pressure with $\gamma\geq1$. This system with $\eps = 0$ is used to model semiconductors or superfluidity, 
the dispersive term $\rho \left(\frac{\sqrt{\rho}_{xx}}{\sqrt{\rho}}\right)_x$ is called quantum Bohm potential \cite{wyatt2005quantum}, while the 
term $\eps j_{xx}$ with $\eps>0$ models artificial viscosity. Traveling wave analysis and convergence to shock waves in the regime $\delta = O(\eps^2)$ 
is performed in \cite{LMZ2020, LZ2021}. It is here improved  by showing that  diffusive-dispersive shock profiles converge to shocks in the regime $\delta = o(\eps)$.

The third example is the dissipative Peregrine-Boussinesq system
\begin{equation}\label{eq:boussinesqs2} 
    \begin{aligned}
        &\eta_t + u_x + (\eta u)_x = 0\ ,\\
        &u_t + \eta_x + uu_x - \delta u_{xxt} - \varepsilon u_{xx} = 0\ .
    \end{aligned}
\end{equation}
We consider traveling wave solutions $(\eta (\tau), u(\tau))$,  $\tau=-\frac{x-st}{\sqrt{s\delta}}$ for $s>1$,  under the limiting conditions
\begin{equation}\label{eq:condition1}
\lim_{\tau \to -\infty} (\eta , u) = (\eta^-,u^-) = (0,0), \quad 
\lim_{\tau \to +\infty} (\eta, u) = (\eta^+,u^+)\ ,
\end{equation}
Here $\eta$ is the elevation of the free surface, and $u$ the horizontal velocity of the fluid measured at some height above a flat bottom.
This model has been used for the prediction of undular bores, \cite{BMT2022},  as a balance of nonlinear shock formation of the shallow water equations
and dispersive effects of water waves.  For some values of Froude number though, the oscillations can disappear and classical shock waves can be formed.
Existence of traveling waves can be found in \cite{BMT2022}; this result is complemented here by convergence to shock waves in the regime $\delta = o (\eps)$.

A key ingredient is the analysis of the length of the oscillatory tail inspired by the approach of \cite{PR2007}.
The traveling wave problem is recast as Hamiltonian system with friction of size  $c=\eps/\sqrt{\delta}s$, with $s$ the shock speed, like in \cite{JMS1995,PR2007}.
The regime of moderate dispersion corresponds to small friction $c < c^*$, where $c^*$ is some critical threshold.  In contrast to  \cite{PR2007}  the genuine nonlinearity hypothesis is
replaced by the use of Liu E-condition for shock admissibility. The main result concerning the size of oscillations is stated in 
Proposition \ref{propsize}. It is used to show convergence of traveling wave solutions of \eqref{EDDI}
to the equations of elasticity in the limit $\eps,\delta \to 0$ with $\delta = o(\eps)$, see Theorem \ref{thm:convergence}.
The same methodology is applied to show convergence from traveling waves to shocks for the quantum hydrodynamics system with artificial viscosity  \eqref{eq:QHDDDI}  
and a similar result for the  Peregrine-Boussinesq system with viscosity \eqref{eq:boussinesqs2}; in both cases in the regime $\delta = o(\eps)$.

The manuscript is organized as follows:  In Section \ref{sec:preliminary} the traveling wave problem for the diffusive-dispersive regularization of the
elasticity system \eqref{EDDI} is reduced to a Hamiltonian system with friction  \eqref{PS2}--\eqref{defPhi3}. 
Moderate dispersion leads to the study of a regime of weak friction, carried out in Section \ref{sec:main}.
The length of the oscillatory tail for traveling wave solutions is estimated in Proposition \ref{propsize}. As a corollary, convergence from
traveling waves to shocks for \eqref{EDDI} holds for $\delta = o(\eps)$, stated in  Theorem  \ref{thm:convergence}. 
In Section \ref{sec:qhd}, the quantum hydrodynamic system with viscosity is considered for genuinely nonlinear pressures. The problem of
traveling waves is recast in the form of the problem \eqref{PS2}--\eqref{defPhi3}, and convergence to Lax shocks is
shown in the moderate dispersion regime $\delta=o(\eps)$, see Theorem \ref{TQHD}.
The dissipative Peregrine-Boussinesq system \eqref{eq:boussinesqs2} is studied in section \ref{sec:pbs} and convergence in the moderate
dispersion regime is again based in Proposition \ref{propsize}.

\section{Diffusion-dispersion approximation of the elasticity system}\label{sec:preliminary}

We consider a diffusive-dispersive approximation for the one-dimensional elasticity system 
\begin{equation}
\label{EDD}
\begin{aligned}
u_t &=v_x\ ,   \\
 v_t &=(\sigma(u))_x+\eps v_{xx}-\delta u_{xxx}\ , 
\end{aligned}
\end{equation}
where $(t,x)\in\mathbb{R_+}\times\mathbb{R}$ are the time and space variables, $u$ is the strain, $v$ the velocity and $\sigma(u)$ the stress depending on $u$. 
The parameters $\eps>0$, $\delta > 0$ in \eqref{EDD} measure respectively the sizes of diffusion and dispersion.
Throughout this work we assume that $\sigma'(u) > 0$.

\subsection{Preliminaries: Shocks for the p-system}\label{eq:psystem}
The system \eqref{EDD} is a regularization of the equations of one-dimensional nonlinear elasticity, also called $p$-system:
\begin{equation}
\label{EL}
\begin{aligned}
\del_t u  &=  \del_x v  \, ,
\\
\del_t v  &=  \del_x \sigma(u)  \;.\
\end{aligned}
\end{equation}
There are interpretations of \eqref{EL} representing both longitudinal and shear motions; for longitudinal motions $u >0$ 
is interpreted as the longitudinal strain, for shear motions $u \in \R$ is a shear strain. When $u$ represents longitudinal motions
$\eps v_{xx}$ needs to be replaced by $(\frac{\eps}{u} v_x )_x$ but we will not consider such issues here focusing on the 
essential behaviors.

 When  $\sigma'(u)>0$, the system \eqref{EL} is strictly hyperbolic 
with wave speeds $\lambda_1=-\sqrt{\sigma'(u)}$, $\lambda_2=\sqrt{\sigma'(u)}$.
Shocks are generated by solving the Rankine-Hugoniot conditions
\begin{equation}\label{rh}
\begin{aligned}
- s (u_+ - u_-) &= (v_+ - v_-)\ ,
\\
- s (v_+ - v_-) &= (\sigma(u_+ ) -\sigma(u_-) )\ ,
\end{aligned}
\end{equation}
where $s$ is the shock speed and $(u_-, v_-)$, $(u_+ , v_+)$ the left and right states, respectively.
The shock speed is computed by 
\begin{equation}\label{shockspeed}
s^2=(\sigma(u_+)-\sigma(u_-))/(u_+-u_-) \; ,
\end{equation}
and \eqref{EL} admits two types of shocks: 
1-shocks with $s<0$ moving backwards and 2-shocks with $s>0$ moving forward.
When  $\sigma''(u) \ne 0$ the system is called genuinely nonlinear; this assumption is not, in general, adopted here
and $\sigma(u)$ will be allowed to have inflection points.

Admissibility conditions are imposed on shocks, motivated by either stability considerations 
or from requesting that admissible shocks emerge as limits of traveling waves for viscosity regularizations. 
We refer to  \cite[Ch VIII]{Dafermos} for an in depth discussion
of shock-admissibility criteria and \cite[Ch 18]{Smoller} for the construction of shock curves and the solution 
of the Riemann problem for \eqref{EL}.

For \eqref{EL}, under the hyperbolicity assumption $\sigma'(u) > 0$, the usual admissibility criteria are:
\begin{itemize}
\item[(i)] For genuinely nonlinear systems $\sigma''(u) \ne 0$ admissible shocks are selected by 
the Lax-shock admissibility criterion, stating that  admissible 1-shocks ($s<0$) satisfy 
\begin{equation}\label{shockL1}
-\sqrt{\sigma'(u_+)}<s<-\sqrt{\sigma'(u_-)} \; ,
\end{equation}
while admissible 2-shocks ($s>0$) satisfy 
\begin{equation}\label{shockL2}
\sqrt{\sigma'(u_+)}<s<\sqrt{\sigma'(u_-)}\; . 
\end{equation}

\item[(ii)] When $\sigma''(u)$ changes sign admissible shocks of \eqref{EL} are selected by the 
Wendroff E-condition, which  for 2-shocks dictates that $\sigma$ satisfies
\begin{equation}\label{shockE}
\frac{\sigma(u) - \sigma(u_-)}{u - u_-} \ge s^2 \ge \frac{\sigma(u) - \sigma(u_+)}{u - u_+}  \ ,  
\tag{H$_{E}$}
\end{equation}
for any $u$ between $u_-$ and $u_+$.  
The Wendroff E-condition for 1-shocks reads like \eqref{shockE} with the inequalities reversed.
\end{itemize}

A discriminating criterion capturing the internal stability of shocks and used for the solution
of the Riemann problem for general fluxes is the Liu shock admissibility criterion,  \cite[Sec 8.4]{Dafermos}.
At the level of the particular system \eqref{EL} the Liu shock admissibility
criterion is equivalent to the Wendroff E-condition. The reader can easily check that the latter  (\eqref{shockE} for $s>0$)
implies at the endpoints a weak version of the Lax inequality \eqref{shockL2}, 
where strict inequality might be replaced by equality in which case the shock becomes a (right or left) contact
discontinuity.

\subsection{Traveling waves}\label{sec:travwave}

We look for a traveling wave solution  $(u^{\eps,\delta}, v^{\eps,\delta})$ of \eqref{EDD} 
in the form
\begin{equation}\label{tws}
\begin{aligned}
u^{\eps,\delta} (x,t) &= u \left ( \frac{x-st}{\sqrt{\delta}} \right)= u (\tau) \; , 
\\[5pt]
v^{\eps,\delta}(x,t) &= v \left ( \frac{x-st}{\sqrt{\delta}} \right) = v (\tau)  \, ,
\end{aligned}
\end{equation}
connecting states $(u_-,v_-)$ and  $(u_+,v_+)$ that satisfy the Rankine-Hugoniot conditions \eqref{rh}.
Setting $\tau = \frac{x-st}{\sqrt{\delta}}$ and retaining the notation $(u(\tau), v(\tau))$ for the traveling wave, 
we need to solve a system of ordinary differential equations
\begin{equation}\label{twelas}
\begin{aligned}
-s u' - v' &=0\ ,
\\
-s v' - \sigma(u)' &= \frac{\eps}{\sqrt{\delta}} v'' - u''' \, .
\end{aligned}
\end{equation}
Existence of traveling waves is a well studied problem; the objective is to provide conditions that
guarantee  convergence of the traveling wave as $\eps,\delta \to 0$ to the associated shock of \eqref{EL} in the moderate dispersion regime 
$\delta = o(\eps)$.

Consider the problem of constructing traveling wave solutions of \eqref{twelas} satisfying
$$
\lim_{\tau \to \pm \infty} u  = u_\pm, \quad  \lim_{\tau \to \pm \infty} v  = v_\pm,\quad
\lim_{\tau \to \pm \infty} v' = \lim_{\tau \to \pm \infty} u'' = 0 \, .
$$
Integrating \eqref{twelas} in $(-\infty, \tau)$ leads to solve the boundary value problem
\begin{align}
\label{eqn2}
u'' + \frac{s \eps}{\sqrt{\delta}} u' - \big ( \sigma(u) - \sigma (u_-) - s^2 (u - u_-) \big ) &= 0\ ,
\\
u(\pm \infty) = u_\pm  &\ ,
\end{align}
and define $v$ via the equation
\begin{equation}\label{determinev}
v - v_- = - s (u - u_-) \, .
\end{equation}
Necessary for the existence of traveling waves is that the end states satisfy \eqref{rh}.

In summary, denoting $c=\frac{s\eps}{\sqrt{\delta}}$,  
traveling waves are constructed by solving the ordinary differential equation 
\begin{equation}
\label{problem} 
\begin{aligned}
u'' + c u' +  \phi(u)  &= 0\ ,
\\
u(\pm \infty) &= u_\pm\ , 
\\
\phi(u) &:= - \big ( \sigma(u) - \sigma (u_-) - s^2 (u - u_-) \big )  \ .
\end{aligned}
\end{equation}
Following the approach for traveling waves of the KPP equation in \cite{Fife1979} and for the viscous Burgers-KdV equation in \cite{PR2007},
Problem \eqref{problem} is viewed as a Hamiltonian system with friction,   by setting $w = u^\prime$
and writing
\begin{equation}
\label{PS}
\begin{aligned}
\frac{du}{d\tau} &= w \; ,   
\\
 \frac{dw}{d\tau} &= - \phi(u)-cw \; .
\end{aligned}
\end{equation}
Define the potential $\Phi(u)$ by $\frac{d \Phi}{d u} = \phi (u)$ and $\Phi (u_+) = 0$, namely
\begin{equation}\label{defnPhi}
\Phi(u) =  \int^{u}_{u_+} \phi(z) ~dz = - \int^{u}_{u_+} \big ( \sigma(z) - \sigma (u_-) - s^2 (z - u_-) \big )~ dz\ .
\end{equation}
The energy of \eqref{PS},  $E(u,w)=\frac{w^2}{2}+\Phi(u)$, satisfies
\begin{equation}\label{EC}
\frac{d}{d\tau}E(u(\tau),w(\tau))=-cw(\tau)^2 \, .
\end{equation}
The associated Hamiltonian system of \eqref{PS} (when  $c = 0$) 
evolves on orbits of constant energy $\tfrac{1}{2} w^2 + \Phi(u) = E$, and its trajectories are identified 
by integrating the differential equations
\begin{equation}\label{eq:enerrel}
\frac{du}{d\tau} = \pm \sqrt{2( E - \Phi (u)) }\ .
\end{equation}

\subsection{Existence of traveling waves for $\eps$, $\delta > 0$ fixed}\label{sec:extravwave}

Existence and uniqueness (up to translation) results for traveling waves of \eqref{EDD} have been presented by several authors: Hagan-Slemrod \cite{HS1983}, Boldrini \cite{B1987}
and Bedjaoui-Lefloch \cite{BL2001,BL2002}. Traveling waves for the viscous Burgers-KdV \eqref{eq:vbKdV} lead to the same problem and were constructed in
\cite{BS1985}. These references show existence for $\eps$, $\delta > 0$ fixed and convergence to shock waves in the regime  $\delta = O(\eps^2)$. An outline 
of existence is provided in Theorem \ref{thm:existence} following  Hagan-Slemrod \cite{HS1983}.

The main theme is the study of convergence of traveling waves to shocks in the regime $o (\eps) \le  \delta < O(\eps^2)$ where traveling waves have oscillatory tails. 
We first review the framework of pertinent hypotheses
and their relation with shock-admissibility criteria, and then we prove convergence to shock waves.
We follow an approach devised in Perthame-Ryzhik \cite{PR2007} for traveling waves of scalar viscous Burgers-KdV equations \eqref{eq:vbKdV} extending their analysis to systems \eqref{EDD} with no genuine nonlinearity assumptions.

\subsubsection{\bf Hypotheses on $\sigma(u)$.}
We assume hyperbolicity $\sigma'(u) > 0$ and consider the general case that $\sigma''(u)$ changes sign.
For definiteness we restrict to  (forward moving) 2-shocks $s>0$ and states  $u_- <  u_+$. (A similar analysis can be performed for
(backward moving) 1-shocks $s < 0$ with $u_- > u_+$.)
We require the shock satisfies the Lax shock condition 
\begin{equation}\label{shockL}
\sqrt{\sigma'(u_+)}<s<\sqrt{\sigma'(u_-)}  \, ,  \qquad \qquad \qquad 
\tag{H$_L$}
\end{equation}
as well as the condition
\begin{equation}\label{shocksE}
\sigma(u) - \sigma (u_-) - s^2 (u - u_-) > 0     \quad \mbox{ for $u \in (u_- , u_+)$}\ .
\tag{H$_{sE}$}
\end{equation}
Assume also that $u_s > u_+$ is such that $\Phi (u_s) = \Phi (u_-)$ and that no root of the function $\phi(u)$ exists in
the interval $(u_+, u_s)$, that is
\begin{equation}\label{shockoE}
\sigma(u) - \sigma(u_+) - s^2 (u - u_+) < 0      \quad \mbox{ for $u \in ( u_+ , u_s)$}\ .
\tag{H$_{oE}$}
\end{equation}
The analysis we present will also apply to 1-shocks $s < 0$ with $u_+ < u_-$ by imposing the Lax shock condition \eqref{shockL1}
and reversing the inequalities in \eqref{shocksE}, \eqref{shockoE}.
\begin{figure}[hbp] \centering 
\includegraphics[width=.65\textwidth,clip,trim=0pt 0pt 0pt 0pt]{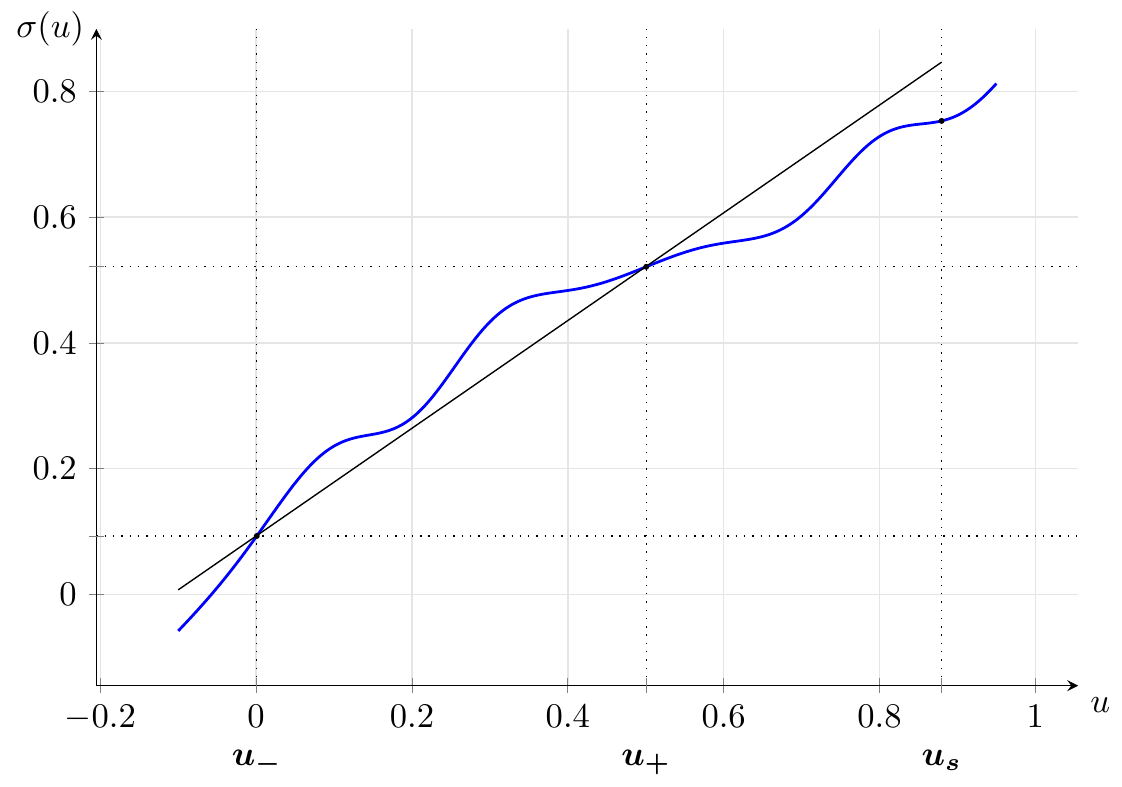}
\caption{Typical graph of $\sigma(u)$}
\label{fig:sigma}
\end{figure}
\begin{figure}[hbp] \centering 
\includegraphics[width=.65\textwidth,clip,trim=0pt 0pt 0pt 0pt]{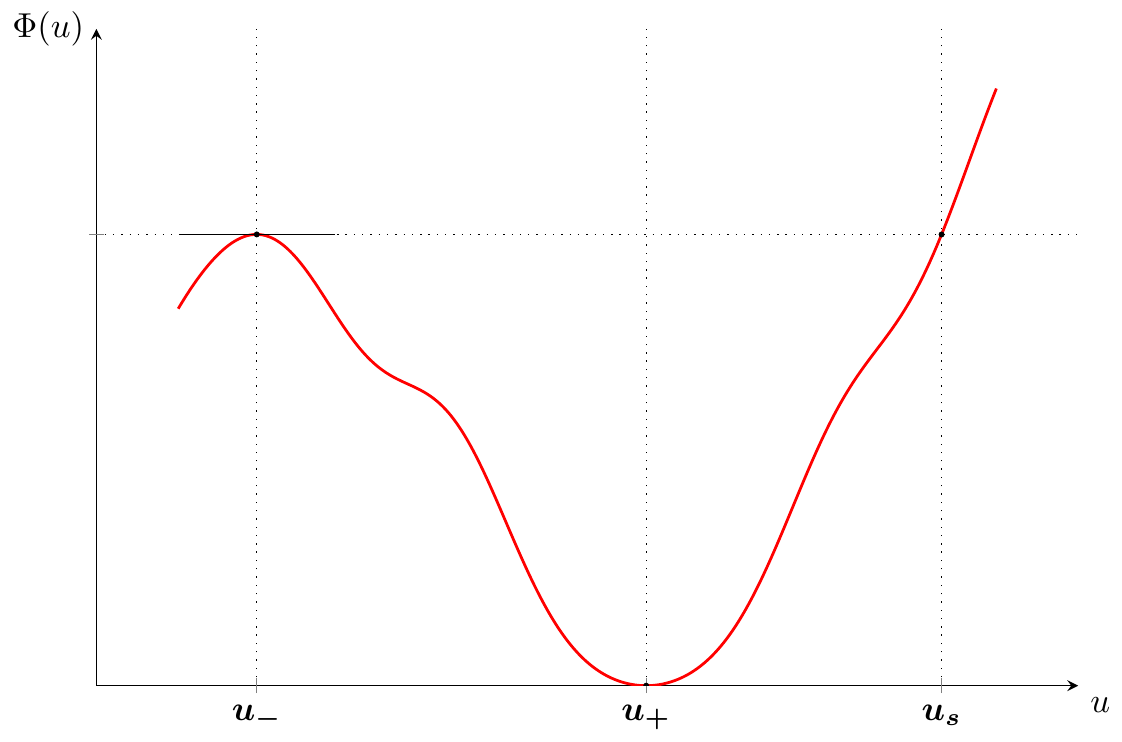}
\caption{Typical graph of $\Phi(u)$}
\label{fig:Phi}
\end{figure}

The following remarks on the hypotheses are in order: 
Condition \eqref{shocksE} is a strengthened version of the Wendroff E-condition \eqref{shockE}. Indeed, \eqref{shocksE}
implies the left inequality in \eqref{shockE} as a strict inequality and, using \eqref{shockspeed}, we obtain the right inequality 
in \eqref{shockE} again as a strict inequality. 
Hypothesis \eqref{shockL} excludes the possibility of having a contact discontinuity at the end-points $u_\pm$ 
while \eqref{shocksE} excludes  a composite shock with an internal contact discontinuity. 
(We remark that excluding contact discontinuities at the end-points can conceivably be avoided
by imposing assumptions on the order of tangency between the shock and the graph $\sigma(u)$ 
at the states $u_\pm$; we do not pursue that point here).

Hypothesis \eqref{shockoE} ensures the potential function $\Phi(u)$ has no extrema other than the critical points
in the region $[u_-, u_s]$. We refer to Figure \ref{fig:sigma} for a geometric interpretation of the 
position of the graph of $\sigma(u)$ and to Figure \ref{fig:Phi}  for the form of the potential $\Phi(u)$.

If $\sigma$ is genuinely nonlinear and for definiteness we focus on a concave $\sigma$ and a forward moving shock $s >0$,
then one easily checks that the conditions
\begin{equation}\label{HypGN}
\begin{aligned}
\sigma'' (u) &< 0 \ ,
\\
\sqrt{\sigma'(u_+)} < s &< \sqrt{\sigma'(u_-)} \ ,
\\
u_- <  u_+\,\,\, &\mbox{and} \,\,\,v_->v_+ \ ,
\end{aligned}
\tag{H$_{gn}$}
\end{equation}
imply the framework of hypotheses \eqref{shockL}, \eqref{shocksE} and \eqref{shockoE}.
(An analogous remark holds for backward moving shocks $s < 0$ with $u_- > u_+$.)


\subsubsection{\bf Existence of traveling waves.} Consider now the problem \eqref{PS}, 
where $s > 0$ satisfies \eqref{shockspeed}, $c=\frac{s\eps}{\sqrt{\delta}} > 0$, 
\begin{equation}
\label{defphi}
\begin{aligned}
\phi(u) &= - \big ( \sigma(u) - \sigma (u_-) - s^2 (u - u_-) \big )\ ,
\\
\Phi(u) &= \int_u^{u_+} \big ( \sigma(z) - \sigma (u_-) - s^2 (z - u_-) \big ) ~dz \, .
\end{aligned}
\end{equation}
We adopt the hypotheses \eqref{shockL}, \eqref{shocksE}, \eqref{shockoE} and prove the following theorem:

\begin{theorem}\label{thm:existence}
For $\eps, \delta > 0$ fixed there exists a unique up to horizontal translations traveling wave solution $(u(\tau), v(\tau))$ of the form \eqref{tws} to the system \eqref{EDD}, 
connecting the state $(u_-,v_-)$ on the left to the state $(u_+,v_+)$ on the right.
\end{theorem}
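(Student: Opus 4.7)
The plan is to recast the traveling wave problem as a planar autonomous ODE and to construct a heteroclinic connection from $(u_-,0)$ to $(u_+,0)$ by tracking the unstable manifold of the saddle at $(u_-,0)$, using the energy $E(u,w)=\tfrac{1}{2}w^2+\Phi(u)$ as a Lyapunov function. This broadly follows the dynamical systems approach of Hagan--Slemrod.

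First I would analyze the two rest points of \eqref{PS}. Since $\phi(u_\pm)=0$ by Rankine--Hugoniot, both $(u_\pm,0)$ are equilibria, with linearization yielding characteristic equation $\lambda^2+c\lambda+\phi'(u_\pm)=0$. Hypothesis \eqref{shockL} gives $\phi'(u_-)=-(\sigma'(u_-)-s^2)<0$ and $\phi'(u_+)=-(\sigma'(u_+)-s^2)>0$, so $(u_-,0)$ is a hyperbolic saddle and $(u_+,0)$ is asymptotically stable (a node when $c^2\ge 4\phi'(u_+)$, a focus otherwise). The branch of the one-dimensional unstable manifold of $(u_-,0)$ tangent to the eigenvector with slope $\lambda_+>0$ leaves the saddle into the quadrant $\{u>u_-,\,w>0\}$; this is the candidate heteroclinic, parametrized uniquely up to a time translation.

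Next I would trap this orbit. Along solutions, \eqref{EC} gives $E'(\tau)=-cw^2\le 0$, strictly negative whenever $w\neq 0$. Hypothesis \eqref{shocksE} forces $\phi<0$ on $(u_-,u_+)$, so $\Phi$ strictly decreases there from $\Phi(u_-)>0$ to $\Phi(u_+)=0$; hypothesis \eqref{shockoE} together with Rankine--Hugoniot forces $\phi>0$ on $(u_+,u_s)$, so $\Phi$ strictly increases on $(u_+,u_s)$ up to $\Phi(u_s)=\Phi(u_-)$, consistent with Figure~\ref{fig:Phi}. Right after leaving the saddle along the unstable direction, $w\neq 0$ so $E$ drops strictly below $\Phi(u_-)$; thereafter the identity $w^2/2=E-\Phi(u)$ forbids the orbit both from returning to $u=u_-$ and from reaching $u=u_s$. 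The orbit is therefore confined to the bounded strip $\{u_-<u<u_s\}\times\mathbb{R}$, and global existence in $\tau$ follows.

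The main obstacle is to rule out that the confined orbit wanders indefinitely without settling at $(u_+,0)$. For this I would invoke LaSalle's invariance principle: the $\omega$-limit set is nonempty, compact, invariant, and contained in $\{w=0\}$; invariance together with \eqref{PS} forces $\phi(u)\equiv 0$ there, and by \eqref{shocksE}--\eqref{shockoE} the only zeros of $\phi$ in $[u_-,u_s]$ are $u_-$ and $u_+$. Since the energy level $\Phi(u_-)$ has been strictly passed, $(u_-,0)$ is excluded, and the $\omega$-limit set reduces to $\{(u_+,0)\}$. Uniqueness up to translation follows at once from the uniqueness of the outgoing branch of the unstable manifold of the hyperbolic saddle: any traveling wave profile must be a time translate of the orbit just produced. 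The velocity component $v$ is then recovered from \eqref{determinev}, completing the construction.
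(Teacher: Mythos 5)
Your proposal is correct and follows essentially the same route as the paper: both track the outgoing branch of the unstable manifold of the saddle at $(u_-,0)$, trap it using the dissipation of the energy $E=\tfrac12 w^2+\Phi(u)$ inside the region bounded by the level set $E=\Phi(u_-)$ (the homoclinic of the frictionless system), and conclude convergence to $(u_+,0)$ by a limit-set argument, with uniqueness up to translation coming from the one-dimensionality of the unstable manifold. The only difference is that the paper invokes the Poincar\'e--Bendixson theorem at the final step where you use LaSalle's invariance principle; these are interchangeable here.
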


\begin{proof}
There are only two equilibrium points in the range $[u_-, u_s]$ of the system (\ref{PS}), namely $(u_-, 0)$, $(u_+, 0)$, and the linearized equations around these equilibria
become
$$
\frac{d}{d\tau} \begin{pmatrix}  U \\ W \end{pmatrix}
=  \begin{pmatrix}  0 & 1 \\  - \phi^\prime( u_\pm ) & -c \end{pmatrix} \begin{pmatrix}  U \\ W \end{pmatrix}\ .
$$
Consider the equilibrium $(u_-, 0)$. The eigenvalues are computed by
$$
\lambda^2 + c \lambda - \alpha_- = 0 \qquad \mbox{where \quad  $\alpha_- = - \phi'(u_-) = \sigma^\prime (u_-) - s^2 > 0$}\ ,
$$
they are
$$
\lambda_\pm = - \tfrac{c}{2} \pm \tfrac{1}{2} \sqrt{ c^2 + 4 \alpha_-}\ ,
$$
both real and satisfy $\lambda_- < 0 < \lambda_+$ and thus $(u_-, 0)$ is a saddle.
The directions of the stable and unstable manifolds are given by the two corresponding eigenvectors
$r_- = \left ( \begin{matrix} 1 \\ \lambda_- \end{matrix} \right )$ for the unstable and $r_+ = \begin{pmatrix} 1 \\ \lambda_+ \end{pmatrix}$ for the stable
manifold. Later we will need the property that
\begin{equation}\label{prop}
0 < \lambda_+ =: g(c) = - \tfrac{c}{2} + \tfrac{1}{2} \sqrt{ c^2 + 4 \alpha_-}  < \sqrt{\alpha_-} \ ,
\end{equation}
which is true because $g(c)$ satisfies $g'(c) < 0$, $g''(c) >0$ and thus $g(c) < g(0)$.

Next for the equilibrium $(u_+, 0)$, the eigenvalues are computed by
$$
\Lambda^2 + c \Lambda - \alpha_+ = 0 \qquad \mbox{where \quad  $\alpha_+ = - \phi'(u_+) = \sigma^\prime (u_+) - s^2  < 0$}\ ,
$$
and they are 
$$
\Lambda_\pm = - \tfrac{c}{2} \pm \tfrac{1}{2} \sqrt{ c^2 + 4 \alpha_+}\ .
$$
We distinguish two cases: (i) When $c^2 >  4  |\alpha_+|$ there are two real roots with $\Lambda_- < \Lambda_+ < 0$ and the equilibrium 
is a stable node. (ii) By contrast, in the range of weak friction $c^2 <   4  |\alpha_+|$ the eigenvalues are complex
$$
\Lambda_\pm = - \tfrac{c}{2} \pm \tfrac{1}{2} i \sqrt{ |c^2 + 4 \alpha_+ |}\ ,
$$
with negative real part and $(u_+, 0)$ is an attracting spiral.

Consider the region $\cD$ bounded by the curves
\begin{equation}\label{invdom}
w^2 = 2 \big ( \Phi(u_-) - \Phi(u) \big ) = 2 \int_{u_-}^u \sigma(z) - \sigma (u_-) - s^2 (z - u_-) \; dz\ .
\end{equation}
The curves are symmetric with respect to the $u$-axis. For $u \sim u_-$ we compute that
$$
w^2 \sim 2 \int_{u_-}^u (\sigma'(u_-) - s^2) (z - u_-) \, dz = (\sigma'(u_-) - s^2) (u- u_-)^2\ ,
$$
therefore
$$
\frac{dw}{du} (0) \sim  \pm \left(\sqrt{\sigma'(u_-) - s^2} \right) ( u - u_-)  \quad \mbox{ for $u > u_-$, $u \sim u_-$}\ .
$$

Next, in the range $u < u_s$ and $u \sim u_s$ we have
$$
\begin{aligned}
w^2 &= 2 \big ( \Phi(u_-) - \Phi (u) \big ) = 2 \big ( \Phi(u_s) - \Phi (u) \big ) 
\\
&= - 2 \int_{u}^{u_s}  \sigma(z) - \sigma (u_-) - s^2 (z - u_-) \; dz\ .
\end{aligned}
$$
Observe that by \eqref{shockoE} there exist $\alpha > 0$, $A > 0$ such that
$$
s^2 - A < \frac{\sigma(u) - \sigma(u_-)}{u- u_+} < s^2 - \alpha\ ,
$$
and thus we can show that for $u \sim u_s$, $u < u_s$  we have
$$
\alpha (u_s - u) (u + u_s - 2 u_+) < w^2 < A (u_s - u) (u + u_s - 2 u_+)\ ,
$$
which shows that the derivative $\frac{dw}{du} \sim \sqrt{u_s - u}$ in the vicinity of $u\sim u_s$.

The domain $\cD$ is enclosed by the curves $\tfrac{1}{2} w^2 + \Phi(u) = \Phi(u_-) $ which is the homoclinic orbit 
of the Hamiltonian system \eqref{PS} with $c=0$. The normal to this curve is $N = (\phi(u), w)$. Then we compute that along 
the flow of \eqref{PS} it is
$$
\left ( \frac{du}{d\tau} , \frac{dw}{d\tau} \right ) \cdot N = - c w^2 \le 0\ .
$$
The domain $\cD$ is positively invariant along the flow of \eqref{PS} and, by \eqref{prop},
the unstable manifold of the linearized system at $(u_-, 0)$ points inside $\cD$ for $c > 0$. 

The heteroclinic orbit is constructed as follows. Pick a point on the unstable manifold of \eqref{PS} at $(u_-, 0)$ which
for $u \sim u_-$ is inside $\cD$. The flow from this point backwards in time will converge to $(u_- , 0)$. Going forward in time
the flow cannot escape $\cD$ and by the Poincar\`e-Bendixon theorem it will converge to $(u_+, 0)$, giving the desired
heteroclinic orbit. This is a one-dimensional object and unique up to time-shifts.
\end{proof}

Two regimes distinguish the behavior of the heteroclinic orbit. For $c^2 >  4  |\alpha_+|$ the orbit is monotone.
For $ 0 < c^2 <  4  |\alpha_+|$ using the stable manifold theorem the orbit has an oscillatory behavior (see \cite{BS1985}).
In the following section we study the size of oscillations of the orbit.


\section{The effect of weak friction on Hamiltonian systems}\label{sec:main}
The aim of this section is to study the limit of traveling wave solutions of  \eqref{EDD} as $\eps, \delta \to 0$.
The technical vehicle is to study the oscillatory behavior for solutions $(w_c (\tau), u_c(\tau))$  to \eqref{PS}
in the regime of weak friction
\begin{equation}\label{rangec}
0 < c < c^\star  \quad \mbox{ where  $ c^\star = 2 \sqrt{|\alpha_+|} = 2 \sqrt{s^2 - \sigma^\prime (u_+)}$}\ .
\end{equation}
The $c$-dependence of solutions will be suppressed except when necessary. We prove the following theorem:

\begin{theorem}\label{thm:convergence}
 Under hypotheses \eqref{shockL}, \eqref{shocksE}, \eqref{shockoE} and for $0 < c < c^\star$ as in \eqref{rangec},
there exists a unique, up to translations, traveling wave solution $(u(\tau), v(\tau))$ to \eqref{EDD} connecting $(u_-,v_-)$ on the left to  $(u_+,v_+)$ on the right.
The solution converges strongly, as $\eps , \delta \to 0$ with $\delta = o(\eps)$ to a shock wave for \eqref{EL} that satisfies the Wendroff E-condition
(or the Liu shock admissibility criterion).
\end{theorem}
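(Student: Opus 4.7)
The plan is to combine the existence result of Theorem \ref{thm:existence} with the quantitative control of the oscillatory tail provided by Proposition \ref{propsize}, and then unwind the change of variable $\tau=(x-st)/\sqrt{\delta}$. Since $v$ is algebraically determined by $u$ through \eqref{determinev}, it suffices to treat the profile $u$. Fix a translation gauge, say $u(0)=\tfrac{1}{2}(u_-+u_+)$, and for a small $\eta>0$ split the $\tau$-line into a left saddle-exit interval $(-\infty,-T_1]$, a bounded transition region $[-T_1,T_2]$, and an oscillatory tail $(T_2,+\infty)$. On $(-\infty,-T_1]$ the orbit is close to the stable/unstable manifolds of the saddle $(u_-,0)$, and since the unstable eigenvalue $\lambda_+=g(c)$ is bounded below by a positive constant uniformly in $c\in(0,c^\star)$ by \eqref{prop}, $T_1$ can be chosen uniform in $c$. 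The transition region has length uniform in $c\in(0,c^\star)$ by compactness of the invariant domain $\cD$ and continuous dependence on $c$, which are ingredients already developed in the proof of Theorem \ref{thm:existence}. The oscillatory tail is controlled by Proposition \ref{propsize}, which furnishes $T_2\lesssim 1+c^{-1}\log(1/\eta)$ together with $|u(\tau)-u_+|+|w(\tau)|\lesssim \eta$ for $\tau>T_2$.

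Returning to the physical variable via $x-st=\sqrt{\delta}\tau$, the spatial extent of the region where $u^{\eps,\delta}(\cdot,t)$ differs from the states $u_\pm$ by more than $O(\eta)$ is bounded by
\begin{equation*}
\sqrt{\delta}\,(T_1+T_2)\;\le\;C\sqrt{\delta}+C\,\frac{\sqrt{\delta}}{c}\,\log\frac{1}{\eta}\;=\;C\sqrt{\delta}+C\,\frac{\delta}{s\eps}\,\log\frac{1}{\eta},
\end{equation*}
using $c=s\eps/\sqrt{\delta}$. The moderate dispersion hypothesis $\delta=o(\eps)$ makes the right-hand side tend to zero for each fixed $\eta>0$: this is precisely where the regime $\delta=o(\eps)$ (as opposed to $\delta=O(\eps^2)$) enters the argument. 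Fixing $\psi\in C_c(\R)$ and splitting $\int \psi(x)\bigl(u^{\eps,\delta}(x,t)-\bar u(x,t)\bigr)\,dx$ over the shrinking transition region and its complement (where $u^{\eps,\delta}$ is within $C\eta$ of $u_\pm$), then letting $(\eps,\delta)\to 0$ first and $\eta\to 0$ along a diagonal, yields $u^{\eps,\delta}\to\bar u$ in $L^1_{\mathrm{loc}}$, where $\bar u$ is the step function between $u_-$ and $u_+$ travelling with speed $s$; uniform boundedness of $u$ in the range $[u_-,u_s]$ (from the invariance of $\cD$) then upgrades this to $L^p_{\mathrm{loc}}$ for every $p<\infty$, and \eqref{determinev} delivers the corresponding convergence of $v^{\eps,\delta}$. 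The Rankine–Hugoniot condition \eqref{rh} is encoded in the end states, while the Wendroff E-condition (equivalently the Liu criterion for \eqref{EL}) is an immediate consequence of hypothesis \eqref{shocksE}.

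The crux of the proof is Proposition \ref{propsize}, and the main obstacle is to obtain the $c^{-1}\log(1/\eta)$ bound on the spiral time \emph{uniformly} in $c\in(0,c^\star)$, without invoking genuine nonlinearity. Linearization at $(u_+,0)$ yields eigenvalues with real part $-c/2$ and suggests envelope decay $e^{-c\tau/2}$, but this local picture has to be globalised inside the invariant homoclinic domain $\cD$. The natural tool is the energy identity \eqref{EC}: $dE/d\tau=-cw^2$. What is required is a lower bound on $\int w^2\,d\tau$ per loop in terms of the excess energy $E-\Phi(u_+)$ that does not degenerate as $c\to 0$; hypothesis \eqref{shocksE} provides the convex shape of $\Phi$ on $(u_-,u_+)$ and quadratic vanishing at $u_+$, and \eqref{shockoE} rules out secondary wells or spurious equilibria on the far side of $u_+$. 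Assembling these ingredients should yield a uniform-in-$c$ geometric-energy estimate that replaces the role of genuine nonlinearity in the scalar argument of \cite{PR2007}; once this estimate is in place, the remaining passage from $\tau$ back to $(x,t)$ is routine.
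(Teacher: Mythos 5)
Your proposal is correct and takes essentially the same route as the paper: it reduces the theorem to Proposition \ref{propsize} (length $O(1/c)$ of both the large- and small-energy oscillatory regions in the $\tau$ variable), rescales by $\sqrt{\delta}$ to obtain a transition zone of spatial width $O(\sqrt{\delta}/c)=O(\delta/\eps)$, and concludes strong ($L^1_{\mathrm{loc}}$) convergence to the step function under $\delta=o(\eps)$, which is precisely Section \ref{sec:return} of the paper. The one caveat is that your closing sketch of how Proposition \ref{propsize} is established captures the per-loop energy drop $E(y_{n+1})-E(y_n)\le -Kc$ (the paper's \eqref{decay0}) but omits the other half of Lemma \ref{LNE}, namely the upper bound $y_{n+1}-y_n\le K/(cn)^{1/2}$ on the duration of the $n$-th loop; since the Hamiltonian periods diverge as $E\to E_{max}$ (Lemma \ref{lem:perest}), knowing only that there are $N=O(1/c)$ loops does not by itself give the $O(1/c)$ bound on the total length, and controlling the loop durations through the already-dissipated energy is where the paper's argument does its real work.
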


\par
The method of proof extends to the elasticity system an approach developed for scalar genuinely nonlinear equations in \cite{PR2007}.
It proceeds as follows:
\begin{itemize}
\item[(a)] We analyze the oscillatory behavior of the heteroclinic orbit  $(u_c, w_c)$ of \eqref{PS} in the range \eqref{rangec},
by estimating the energy drop in each cycle and the distance between the minima and maxima for small values of $c>0$. This is presented in Sections \ref{sec:osc1} and \ref{sec:osc2} leading to an estimate of the size of the oscillatory structure in Figure \ref{fig:PS} as a function of $c$.

\item[(b)] The information obtained in (a) is then translated at the level of traveling wave solutions
$(u^{\eps, \delta}(\tau), v^{\eps, \delta}(\tau))$ of \eqref{EDD} by means of rescaling.
\end{itemize}

The oscillatory behavior in this regime is illustrated by a numerical computation for genuinely nonlinear stress $\sigma (u) = \sqrt {u}$
and critical points $u_-=4$, $u_+=5$.  Figure \ref{fig:PS} presents the phase portrait of $(u, w)$ on the right and the form of $u(\tau)$  on the left 
for $c=0.004$, $u_-=4$, $u_+=5$.

\begin{figure}[hbp] \centering 
\includegraphics[width=\textwidth,clip,trim=0 0 0 0]{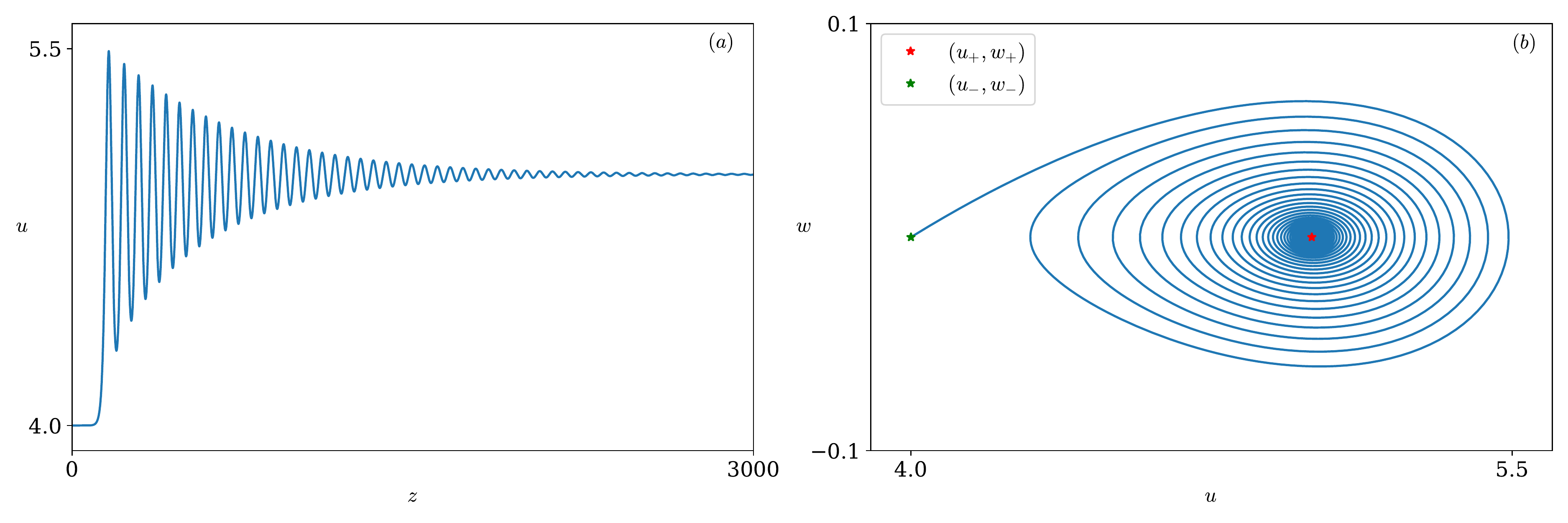}
\caption{(a) Solution $u$; (b) Phase portrait $(u, w)$. ($\sigma(u)=\sqrt{u}$, $c=0.004$, $u_-=4$, $u_+=5$) }
\label{fig:PS}
\end{figure}

Recall that for definiteness we consider the case $u_- < u_+$,  $c = s \eps / \sqrt{\delta} > 0$,  and let $u_c$ solve
\begin{equation}\label{PS2}
\begin{aligned}
u'' + c u' + \phi(u) = 0\ ,
\end{aligned}
\end{equation}
with $\phi(u) = - \big ( \sigma(u) - \sigma (u_-) - s^2 (u - u_-) \big )$, $\phi(u_-) = \phi(u_+) = 0$ and
\begin{equation}\label{defPhi3}
\Phi (u) = \int_{u_-}^u \phi (z) dz \ .
\end{equation}
Hypotheses  \eqref{shocksE}, \eqref{shockoE} imply the only extrema of $\Phi(u)$ in \eqref{defphi}  in the range $[u_-, u_s]$ 
are $u_-$ a strict local maximum and $u_+$ a strict local minimum. 
$\Phi$ is strictly decreasing on $(u_-, u_+)$, strictly increasing on $(u_+, u_s)$ and, by \eqref{shockL},
$$
\begin{aligned}
\frac{d^2 \Phi}{d u^2} &= - ( \sigma^\prime (u) - s^2) \,  , \qquad   \begin{cases}  \Phi^{\prime \prime} (u_- )  < 0 & \\ \Phi^{\prime \prime} (u_+ ) > 0 &\end{cases} \, .
\end{aligned}
$$
Select  $\alpha, \beta > 0$ such that  $\Phi(u_+ - \alpha) = \Phi( u_+ + \beta) = E_m > 0$ and
\begin{equation}\label{laxcon}
\frac{d^2 \Phi}{d u^2} (u) = s^2 -  \sigma^\prime (u)  > 0 \quad \mbox{ for \; $u \in (u_+ - \alpha , u_+ + \beta)$}\ .
\end{equation}
The range of energies is thus split to 
\begin{equation}\label{ensplit}
\begin{aligned}
&\mbox{the big energies}:   \qquad  E_m   < E < E_{max} = \Phi(u_-) \ ,
\\
&\mbox{the small energies}: \qquad   \Phi(u_+) = 0 < E < E_m\ .
 \end{aligned}
\end{equation}

Using  \eqref{shocksE}, \eqref{shockoE}, there exist $0 < \eta < H$ such that
\begin{equation}\label{estim1}
\eta ( u - u_-) <  \sigma(u) - \sigma(u_-) - s^2 (u - u_- ) < H (u - u_-), \quad  u \in ( u_- ,  u_+ - \alpha ]  \, ,
\end{equation}
and there exist $0 < m < M$ such that
\begin{equation}\label{estim2}
- M ( u - u_+) < \sigma(u) - \sigma(u_+) - s^2 (u - u_+ ) < - m (u - u_+), \quad  u \in ( u_+  , u_s ]  \, .
\end{equation}
 
\begin{remark}  In terms of the potential function $\Phi(u)$ the properties that used in the sequel
can be summarized as:
\begin{equation}\label{hypphi1}
\Phi'' ( u_-) = \phi'(u_-) < 0  \, , \quad  \Phi'' ( u_+) = \phi'(u_+) > 0 \, .
\end{equation}
For  $\alpha$, $\beta$ fixed as above, there exist $\eta, H > 0$ and $m, M > 0$ such that
\begin{equation}\label{hypphi2}
\begin{aligned}
\eta ( u - u_-)  &<   \phi(u_-) - \phi (u)  < H (u - u_-),  \, \quad   &&u \in  ( u_- , u_+ - \alpha ]\ ,
\\
m (u - u_+)    & <      \phi (u) - \phi(u_+) < M (u - u_+),  \, \quad  &&u \in ( u_+, u_s]\ .
\end{aligned}
\end{equation}
\end{remark}

\subsection{Periods of orbits of the Hamiltonian system} \label{sec:osc1}
When $c=0$, the system becomes Hamiltonian 
\begin{equation}
\label{PS0}
\begin{cases}
& \frac{du}{d\tau}=w \; ,   \\
& \frac{dw}{d\tau}=\phi(u) \; .\
\end{cases}
\end{equation}
The orbit emanating from the saddle $(u_-, 0)$ with energy $E=E_{max} = \Phi(u_-)$ is a homoclinic.
The remaining orbits  for $0 < E < E_{max}$ are periodic. Using \eqref{eq:enerrel} the period is computed by
\begin{equation}\label{per}
T(E) = 2\int_{u_1(E)}^{u_2(E)}\frac{du}{\sqrt{2(E-\Phi(u))}} \; ,
\end{equation}
where $u_1(E)$ and $u_2(E)$ satisfy $\Phi(u_1(E))=\Phi(u_2(E))=E$, see  Fig. \ref{fig:Phi}. 
First we prove the following:

\begin{lemma} \label{lem:perest}
Under hypotheses \eqref{shockL}, \eqref{shocksE}, \eqref{shockoE}, there exists $T_0 > 0$ such that
$$
T(E) \ge T_0  > 0 \, ,  \quad \forall E > 0 \, .
$$
and $T(E) \to \infty$ as $E \to E_{max}$.
\end{lemma}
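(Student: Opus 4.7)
The plan is to prove the uniform lower bound and the divergence separately, splitting the range of energies at the threshold $E_m$ and using local expansions of $\Phi$ at its two critical points $u_\pm$. In both halves of the lower bound I exploit the elementary inequality $\sqrt{2(E-\Phi(u))}\le \sqrt{2E}$ — valid because $\Phi\ge 0$ on $[u_-,u_s]$ — in \eqref{per}, which reduces matters to a lower bound on the spread $u_2(E)-u_1(E)$ of the turning points. For the large energies $E_m\le E<E_{max}$, the strict monotonicity of $\Phi$ on the two sides of $u_+$, coming from \eqref{shocksE}--\eqref{shockoE}, together with the defining identity $E_m=\Phi(u_+-\alpha)=\Phi(u_++\beta)$, forces $u_1(E)\le u_+-\alpha$ and $u_2(E)\ge u_++\beta$, whence
\[
T(E)\ge \frac{2(u_2-u_1)}{\sqrt{2E_{max}}}\ge \frac{2(\alpha+\beta)}{\sqrt{2E_{max}}}=:T_1>0.
\]

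For the small energies $0<E\le E_m$ the orbit is trapped inside the convex window $(u_+-\alpha,u_++\beta)$ of \eqref{laxcon}. Setting $c_2:=\tfrac12\sup_{u\in(u_+-\alpha,u_++\beta)}\Phi''(u)$, a Taylor expansion at $u_+$ using $\Phi(u_+)=\Phi'(u_+)=0$ yields $\Phi(u)\le c_2(u-u_+)^2$ on that window, and the relation $\Phi(u_i(E))=E$ then forces $|u_i(E)-u_+|\ge\sqrt{E/c_2}$, whence
\[
T(E)\ge\frac{2(u_2-u_1)}{\sqrt{2E}}\ge\frac{4\sqrt{E/c_2}}{\sqrt{2E}}=2\sqrt{2/c_2}=:T_2>0.
\]
Taking $T_0:=\min(T_1,T_2)>0$ yields $T(E)\ge T_0$ throughout $(0,E_{max})$. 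For the divergence, the driving mechanism is the saddle at $u_-$: hypothesis \eqref{shockL} gives $\Phi''(u_-)=s^2-\sigma'(u_-)<0$, so
\[
E_{max}-\Phi(u)=\tfrac12|\Phi''(u_-)|(u-u_-)^2+O((u-u_-)^3)
\]
near $u_-$. The left turning point $u_1(E)\in(u_-,u_+)$ tends to $u_-$ with $(u_1(E)-u_-)^2\sim 2(E_{max}-E)/|\Phi''(u_-)|$, and on a fixed small interval $[u_1(E),u_-+\eta_0]$ the expansion gives $E-\Phi(u)=\tfrac12|\Phi''(u_-)|\bigl((u-u_-)^2-(u_1(E)-u_-)^2\bigr)(1+o(1))$. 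Substituting $\zeta=(u-u_-)/(u_1(E)-u_-)$ in that portion of \eqref{per} reduces it to $(1+o(1))|\Phi''(u_-)|^{-1/2}\cosh^{-1}(\eta_0/(u_1(E)-u_-))$, which diverges logarithmically as $u_1(E)\to u_-$.

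The computations above are routine once the localization of orbits is in place, so I expect the only point requiring attention to be the containment of small-energy orbits inside $(u_+-\alpha,u_++\beta)$ and the escape of large-energy orbits on both sides; both follow at once from the strict monotonicity of $\Phi$ on $(u_-,u_+)$ and $(u_+,u_s)$ — consequences of \eqref{shocksE}--\eqref{shockoE} — and from the definition of $E_m$.
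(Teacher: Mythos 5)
Your proof is correct, but it reaches the uniform lower bound by a genuinely more elementary route than the paper. For $T(E)\ge T_0$ you discard the integrand's singular behavior entirely via $E-\Phi(u)\le E$ and reduce everything to a lower bound on the spread $u_2(E)-u_1(E)$ of the turning points (escape from the window $(u_+-\alpha,u_++\beta)$ for $E\ge E_m$, and $|u_i(E)-u_+|\ge\sqrt{E/c_2}$ from convexity for $E<E_m$); both localization claims do follow from the strict monotonicity of $\Phi$ on $(u_-,u_+)$ and $(u_+,u_s)$, so this part is clean. The paper instead sandwiches $\phi$ linearly on each subinterval (the estimates \eqref{estim1}--\eqref{estim2} and the convexity window \eqref{laxcon}) and evaluates the resulting model integrals exactly ($K(a)=\pi/2$), which is more work but yields \emph{two-sided} control of $T(E)$ — in particular finiteness of the period and quantitative comparison of orbits — and that extra information is reused later in the proof of Lemma \ref{LNE} (e.g.\ in \eqref{converg}); your argument proves the stated lemma but would not substitute for those downstream uses. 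For the divergence as $E\to E_{max}$ your mechanism (logarithmic blow-up of $\int du/\sqrt{(u-u_-)^2-(u_1(E)-u_-)^2}$ at the saddle) is the same as the paper's $F(\rho)\to\infty$; the only imprecision is the claim that $E-\Phi(u)=\tfrac12|\Phi''(u_-)|\bigl((u-u_-)^2-(u_1(E)-u_-)^2\bigr)(1+o(1))$ uniformly, which is delicate near the turning point where the difference of squares is much smaller than the cubic Taylor remainder. Since you only need a lower bound on the integral, replace the asymptotic equality by the one-sided bound $E-\Phi(u)\le \tfrac{H}{2}\bigl((u-u_-)^2-(u_1(E)-u_-)^2\bigr)$ coming directly from \eqref{estim1}, and the $\cosh^{-1}$ divergence goes through without any uniformity issue.
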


\begin{proof}
We estimate the period first for {\it large energies}. The integral in \eqref{per} is split in three parts
\begin{equation}
\frac{1}{2} T(E) =  \left( \int_{u_1(E)}^{u_+ - \alpha} + \int_{u_+ - \alpha}^{u_+ + \beta}  + \int_{u_+ + \beta}^ {u_2(E)} \right) \frac{du}{\sqrt{2(E-\Phi(u))}} \ .
\end{equation}
The main contribution comes from the interval $(u_1(E) , u_+ - \alpha)$ and is estimated as follows:  From $\Phi (u_1(E)) = E$, \eqref{defphi} and  \eqref{estim1},
we have
$$
\begin{aligned}
E-\Phi(u) &= \int_{u_1(E)}^u  \big ( \sigma(z) - \sigma (u_-) - s^2 (z - u_-) \big ) ~ dz
\\
&>  \int_{u_1(E)}^u \eta (z - u_-)~ dz
\\
&= \eta (u- u_1(E) ) \left ( \frac{u + u_1(E)}{2} - u_- \right ),  \qquad u_- < u_1(E) < u < u_+ - \alpha\ ,
\end{aligned}
$$
and similarly
$$
\begin{aligned}
E-\Phi(u) &< \int_{u_1(E)}^u H (z - u_-) ~ dz
\\
&= H  (u- u_1(E) ) \left ( \frac{u + u_1(E)}{2} - u_- \right ),  \qquad u_- < u_1(E) < u < u_+  - \alpha \, .
\end{aligned}
$$
Hence, 
$$
\begin{aligned}
\frac{1}{\sqrt{H}}  F(\rho)  \le
\int_{u_1(E)}^{u_+ - \alpha}  \frac{du}{\sqrt{2(E - \Phi(u))}} \le \frac{1}{\sqrt{\eta}} F(\rho)\ ,
\end{aligned}
$$
where  $\rho = 2 (u_1(E) - u_-)$,
$$
F(\rho) := \int_0^{(u_+ - \alpha - u_1(E))}  \frac{ds}{\sqrt{s(s+ \rho )}} < \infty   \quad \mbox{for} \quad  \rho> 0 \ ,
$$
and $u_+ - \alpha - u_1(E)$ is bounded away from zero in the range of large energies.
Using the monotone convergence theorem,
$$
F(\rho) \to \infty  \quad \mbox{as \quad $\rho = 2(u_1(E) - u_-) \to 0$}\ ,
$$
that is the contribution of that integral to the period becomes infinite as $E \to E_{max}$
and the periodic orbit approaches a homoclinic orbit.

Again we consider {\it large energies} and focus on the complementary region $u \in (u_+ , u_2(E))$.
Using \eqref{defphi}, \eqref{shockspeed} and \eqref{estim2}, we deduce
\begin{align*}
E - \Phi(u) = 
\Phi (u_2(E)) - \Phi (u) &= - \int_u^{u_2(E)}  \big ( \sigma(z) - \sigma (u_+) - s^2 (z - u_+)  \big ) ~ dz
\\
&>  m  \int_u^{u_2(E)} (z - u_+) ~ dz
\\
&= \tfrac{m}{2} (u_2 (E) - u) \left( u_2(E) + u - 2 u_+ \right )\ ,
\\
\int_{u_+}^{u_2(E)}  \frac{du}{\sqrt{2(E - \Phi(u)})} &\le \frac{1}{\sqrt{m}} \int_{0}^{u_2(E)-u_+}  \frac{ds}{ \sqrt{ s  \big ( 2 (u_2 (E) - u_+) - s  \big ) }} < \infty \, ,
\end{align*}
that is the contribution of this integral to the period is finite. 
Finally, for $E > E_m$,  in the region  $u \in [u_+ -\alpha, u_+ + \beta]$,  the potential energy satisfies $0< \Phi(u) < E_m$
and the contribution of the middle integral to the period is easily seen to be bounded,
$$
\frac{\beta - \alpha}{\sqrt{2E}} < \int_{u_+ -\alpha}^{u_+ + \beta} \frac{du}{\sqrt{2(E - \Phi(u))}} \le \frac{\beta - \alpha}{\sqrt{2(E - E_m)}} \, .
$$

Next, consider the regime of {\it small energies}  $0 < E < E_m$ and analyze first the range  $u_+ - \alpha <  u_1(E) \le u \le u_+$. 
Setting $J_\alpha = [u_+ - \alpha, u_+]$
and using \eqref{laxcon} we have
$$
\begin{aligned}
\min_{J_\alpha} \sigma'(u)  &< \frac{\sigma(u_+) - \sigma(u)}{u_+ - u} < \max_{J_\alpha} \sigma'(u) \ ,
\\
- \left(\max_{J_\alpha} \Phi''(u)\right) (u_+ - u) &< \sigma(u_+) - \sigma(u) - s^2 (u_+ - u) <  - \left( \min_{J_\alpha} \Phi''(u) \right)  (u_+ - u) \ .
\end{aligned}
$$
Set $m = \min_{J_\alpha} \Phi''(u) > 0$, $M = \max_{J_\alpha} \Phi''(u) > 0$ and use
$$
E - \Phi (u) =  \int_{u_1(E)}^u \sigma(z) - \sigma(u_+) - s^2 (z-u_+) dz \ ,
$$
to obtain
$$m \int_{u_1(E)}^u (u_+ - z) ~ dz <   E - \Phi (u) < M \int_{u_1(E)}^u (u_+ - z) ~ dz\ ,$$
and hence
\begin{equation}
\label{periodsmall1}
\begin{aligned}
\frac{1}{\sqrt{M}} I(E) &\le \int_{u_1(E)}^{u_+} \frac{du}{\sqrt{ 2(E - \Phi(u))}} < \frac{1}{\sqrt{m}} I(E)\ ,
\\
\mbox{where} \qquad I(E) &= \int_{0}^{u_+ - u_1(E)} \frac{ds}{\sqrt{ s \big ( 2(u_+ - u_1(E)) - s \big ) }}\ .
\end{aligned}
\end{equation}

Again for {\it small energies}  $0 < E < E_m$ we consider next the range $u_+ \le u <   u_2 (E) < u_+  + \beta $
and similarly obtain the bound
\begin{equation}\label{periodsmall2}
\begin{aligned}
\frac{1}{\sqrt{M}} J(E) \le \int_{u_+}^{u_2(E)} \frac{du}{\sqrt{2(E-\Phi(u))}} \le \frac{1}{\sqrt{m}} J(E)\ ,
\\
\mbox{where}  \qquad J(E) = \int_0^{u_2(E) - u_+} \frac{ds}{\sqrt{s \big ( 2(u_2(E) - u_+) - s\big )}}  \ ,
\end{aligned}
\end{equation}
$m = \min_{J_\beta} \Phi''(u)$, $M = \max _{J_\beta} \Phi''(u)$, $J_\beta = [u_+, u_+ + \beta]$ and $m, M > 0$.

We conclude from \eqref{periodsmall1}, \eqref{periodsmall2} that periodic orbits with small energies have periods
of the order of $I(E) + J(E)$. The limiting behavior as $E \to 0$ is computed 
via the integral
\begin{equation}
K(a) := \int_0^a \frac{ds}{\sqrt{s (2a - s)}} = \int_0^a \frac{ds}{\sqrt{a^2 - (a-s)^2}}
= \arcsin \frac{\tau}{a} \;  \Bigg |_0^a = \frac{\pi}{2}\ .
\end{equation}
Therefore,  $J(E) = I(E) = \frac{\pi}{2}$ and $T(E)$  remains bounded from below as $E\to 0$. 
The limit of $T(E)$ can be calculated but we do not pursue this point further.
\end{proof}

\subsection{Effect of friction on orbits}\label{sec:osc2}

We study the oscillatory behavior of solutions $(u_c (\tau), w_c (\tau))$ to \eqref{PS} in the range $0 < c < c^\star$.
Let  $x_n$ be the consecutive maxima of $u_c (\tau)$ and  $y_n$ the minima. By a shift of the independent variable we set the first maximum 
at $x_0 = 0$;  this gives the ordering $y_0 = - \infty < x_0 = 0 < y_1 < x_1 < \cdots < y_n < x_n < \cdots$.

{\it Large energies.} The following lemma, for large energies, estimates the distance between two consecutive extrema, and the energy drop between these points.

\begin{lemma}
\label{LNE}
	For $c>0$ sufficiently small and energies $E_m\leq{E(y_n)}<E_{max}$, there is a constant $K>0$ such that
	$$
	 E(y_{n+1})-E(y_n)\leq-Kc  \, , \qquad 
	y_{n+1}-y_n\leq\frac{K}{(cn)^{1/2}} \ .
	$$ 
\end{lemma}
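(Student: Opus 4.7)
The plan is to use the energy identity $\frac{d}{d\tau}E(u_c,w_c) = -c\, w_c^2$ together with Lemma \ref{lem:perest} to reduce both estimates to the analysis of the Hamiltonian orbits of appropriate energy. Integrating the identity between consecutive minima gives
\begin{equation*}
E(y_{n+1}) - E(y_n) \;=\; -c \int_{y_n}^{y_{n+1}} w_c^2 \, d\tau,
\end{equation*}
so statement (a) will follow from a lower bound $\int_{y_n}^{y_{n+1}} w_c^2\, d\tau \ge K > 0$, uniform in $n$ and in $c$ small, while (b) will follow from this iterated energy drop combined with an upper bound on the period $T(E)$ for $E$ near $E_{max}$.

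For the dissipation bound I would rewrite
\begin{equation*}
\int_{y_n}^{y_{n+1}} w_c^2\, d\tau \;=\; \int_{y_n}^{y_{n+1}} w_c \, u_c' \, d\tau \;=\; \int_\Gamma w \, du,
\end{equation*}
where $\Gamma$ is the arc traced by the orbit in the $(u,w)$-plane from $(u_c(y_n),0)$ through $(u_c(x_n),0)$ to $(u_c(y_{n+1}),0)$. For $c$ small this arc is a perturbation of the closed periodic orbit of \eqref{PS0} with energy $E(y_n)$, whose enclosed area $A(E)$ is a continuous positive function of $E$ on $[E_m, E_{max}]$, with $A(E_{max})$ the finite area of the homoclinic loop. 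Setting $A_0 := \min_{E \in [E_m, E_{max}]} A(E) > 0$, a continuous-dependence argument yields $\int_\Gamma w\, du \ge A_0/2 =: K$ for $c$ small and $E(y_n) \in [E_m, E_{max})$, proving (a).

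For the distance estimate, iterating (a) gives $E_{max} - E(y_n) \ge n K c$ so long as $E(y_n) \ge E_m$. The duration $y_{n+1} - y_n$ is close to the period $T(E(y_n))$ of the corresponding Hamiltonian orbit, which by the proof of Lemma \ref{lem:perest} is dominated near $E_{max}$ by the passage near the saddle $(u_-,0)$. From $\Phi(u_-) - \Phi(u) \sim \tfrac{1}{2}|\Phi''(u_-)|(u-u_-)^2$ one obtains $u_1(E) - u_- \sim C\sqrt{E_{max}-E}$ and hence $F(\rho) \sim \log\tfrac{1}{E_{max}-E}$ in the notation of that proof; using the elementary inequality $\log(1/\varepsilon) \le C'/\sqrt{\varepsilon}$ for $\varepsilon$ small then gives $T(E) \le K_1/\sqrt{E_{max}-E}$, and therefore
\begin{equation*}
y_{n+1} - y_n \;\le\; \frac{K_1}{\sqrt{E_{max}-E(y_n)}} + O(c) \;\le\; \frac{K}{\sqrt{cn}}.
\end{equation*}

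The delicate point, which I expect to be the main obstacle, is justifying this orbit-to-Hamiltonian comparison uniformly as $E(y_n) \uparrow E_{max}$: the period diverges and a crude Gronwall estimate applied over the full cycle is insufficient. I would address this by splitting $\Gamma$ into a piece near $(u_-,0)$, where $w_c$ is small and the stable manifold theorem guarantees that the friction-perturbed local stable and unstable manifolds remain close to the unperturbed separatrices, and a piece away from the saddle where the flow is regular and continuous dependence gives an $O(c)$ perturbation. The region away from the saddle then contributes a uniform positive amount to $\int_\Gamma w \, du$, while the time divergence is confined to the controlled saddle passage and satisfies the claimed logarithmic (and hence $1/\sqrt{\cdot}$) bound.
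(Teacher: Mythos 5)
Your treatment of the first estimate (the energy drop) is sound and close in spirit to the paper's: both reduce to a lower bound on $\int_{y_n}^{y_{n+1}} w_c^2\,d\tau$ by comparison with the Hamiltonian orbit. The paper performs the comparison on a time interval of \emph{fixed} length (so Gronwall applies with uniform constants), while your identity $\int_{y_n}^{y_{n+1}} w_c^2\,d\tau=\int_\Gamma w\,du=$ enclosed area is a nice variant; it works provided you do exactly what you say at the end, namely extract the lower bound from a sub-arc away from the saddle that is traversed in bounded time, since the remaining contribution is nonnegative.

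The genuine divergence, and the gap, is in the estimate $y_{n+1}-y_n\le K/(cn)^{1/2}$. Your route needs $y_{n+1}-y_n\le T(E(y_n))+O(c)$ together with $T(E)\lesssim\log\frac{1}{E_{max}-E}\lesssim (E_{max}-E)^{-1/2}$ and $E_{max}-E(y_n)\ge Knc$; the period asymptotics and the final chain of inequalities are fine, but the time comparison is precisely where a Gronwall argument over a full cycle fails: the constant grows like $e^{LT}$ while $T(E(y_n))\sim\log\frac{1}{nc}$, so the error $cK_T\sim c^{1-CL}$ need not be small. You flag this and invoke the stable manifold theorem, but that theorem controls the geometry of the local invariant manifolds, not passage times; what you actually need is a quantitative bound, of order $\log\frac{1}{nc}$, on the time an orbit entering a neighborhood of the saddle at distance $\gtrsim nc$ from the stable manifold takes to exit, and that local analysis is not supplied. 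The paper sidesteps the issue entirely: from $E_{max}-E(y_n)\ge Knc$ and \eqref{estim1} it deduces $u(y_n)-u_-\ge K'nc$, hence on the region $u\le u_+-\alpha$ the differential inequality $u''+cu'=-\phi(u)\ge\eta\,(u(y_n)-u_-)\ge K''nc$; two integrations give $u(\tau)-u(y_n)\ge Knc\,(\tau-y_n)^2/4$, and since the left-hand side is bounded this forces $\tau-y_n\le K/\sqrt{nc}$ with no comparison to the Hamiltonian flow and no period asymptotics (the regions $u\in[u_+-\alpha,u_++\beta]$ and $u\ge u_++\beta$ are crossed in bounded time by elementary arguments). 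Your plan is salvageable, but as written the key step is unproved; either establish the saddle passage-time estimate or switch to the differential-inequality argument.
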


\begin{proof}
Solutions of \eqref{PS} satisfy the energy dissipation equation \eqref{EC}. 
Using the normalization $u'(0)=0$, we find that the energy drop between $-\infty$ and $\tau_0=0$ is
\begin{equation}\label{ECInf}
 \ E_{max}-E(0)=c\int_{-\infty}^{0}w^2(t)~ dt \; .
\end{equation}

Let $(u_c (\tau), w_c(\tau))$ be a solution of \eqref{PS}  and $(u_0 (\tau), w_0 (\tau))$ be a solution of the Hamiltonian system \eqref{PS0}.
Suppose that both solutions emanate from the same initial data, that is
$$
(u_c (\tau_0), w_c(\tau_0))  = (u_0 (\tau_0), w_0 (\tau_0))  = (u_0, w_0) \in \cD\ ,
$$ 
where $\cD$ is the domain in \eqref{invdom}.
Since $\cD$ is an invariant domain for both \eqref{PS} and \eqref{PS0}, using Gronwall's lemma, we obtain
\begin{equation}\label{Gr}
\begin{aligned}
|w_c (\tau)- w_0 (\tau)| + |u_c (\tau)- u_0 (\tau)| &\leq c  K_T  \sup_{|\zeta-\tau_0|\leq{T}}|w_c (\zeta)|
\\
 &\le c \, \bar K_T  \qquad \quad  \text{ for} \;  |\tau-\tau_0|\leq{T},
\end{aligned}
\end{equation}
 where $K_T$ depends on the Lipschitz constant (on the domain $\cD$) and on $T$.

We will show that
$$
\int_{-\infty}^{0}w_c^2(\tau)~d\tau\geq \kappa  > 0 \; .
$$
Indeed, without loss of generality, taking $\tau_0=0$ in \eqref{Gr}, we have $|w_c (\tau)-w_0(\tau)|\leq c \bar K$ for 
$\tau\in[-1,0]$.  Hence, 
\begin{equation}\label{w2ineq}
\int_{-\infty}^{0}w_c^2(\tau)~d\tau\geq \int_{-1}^{0}w_c^2(\tau)~d\tau \geq \int_{-1}^{0}w_0^2(\tau)~d\tau - \tilde{K} c \geq  K \ ,
\end{equation}
and
$$E(0)\leq E_{max}-Kc \; .$$

Let  now $x_n$ be a maximum and $y_n$, $y_{n+1}$ be consecutive minima of $u_c(\tau)$.
Consider two orbits  $(u_c, w_c)$ and  $(u_0 , w_0 )$ that meet at the same point in phase space, with $u_c (y_n) = u_0(y_n)$, $w_c(y_n) = w_0(y_n) = 0$,
and let $T$ be the period of the periodic orbit.
We claim that
\begin{equation}\label{converg}
 | (x_n - y_n ) - \tfrac{T}{2} | \le o(1) \quad\text{and}\quad |( y_{n+1} - y_n  ) - T | \le o(1)  \quad \mbox{as $c \to 0$} \, .
\end{equation}
This comparison between the period $T$ and the time elapsed between two consecutive extrema $x_n - y_n$ is a consequence of the fact
that the orbits $(u_c, w_c)$ converge to the orbit $(u_0, w_0)$ as $c \to 0$.  To see that observe that by \eqref{Gr},
$$
| u_c (x_n) - u_0(x_n) | \le K_{(x_n - y_n )} c  \, , \quad | u_c (y_n + \tfrac{T}{2} ) - u_0(y_n + \tfrac{T}{2} ) | \le K_T c \, , 
$$
and then use \eqref{per} and Lemma \ref{lem:perest} to obtain
$$
\begin{aligned}
(x_n - y_n) - \tfrac{T}{2} &= \int_{u_0 (y_n)}^{u_0(x_n)}  \frac{du}{\sqrt{2(E-\Phi(u))}} - \int_{u_0 (y_n)}^{u_0 \big ( y_n + \tfrac{T}{2} \big)}  \frac{du}{\sqrt{2(E-\Phi(u))}}
\\
&= \int_{u_0 \big ( y_n + \tfrac{T}{2} \big)}^{u_c (x_n) + O(c)}   \frac{du}{\sqrt{2(E-\Phi(u))}} \to 0  \qquad \mbox{as $ c \to 0$}\ .
\end{aligned}
$$
Similarly is proved the second identity in \eqref{converg}.

This shows that for $E_m\leq{E(y_n)}<E_{max}$ we have $y_{n+1} - y_n \geq T_0$ where $T_0$ does not depend on $n$, and thus
\begin{equation}\label{decay0}
E(y_{n+1} ) -  E( y_n )  \le - c\int_{y_n}^{y_{n+1}}w_c^2(\tau)~d\tau \le - K c\ ,
\end{equation}
and for the energy at $y_n$,
\begin{equation}\label{decay}
E(y_n)\leq E_{max}-Kcn \; .
\end{equation}

For  $(u(\tau), w(\tau))$ solution of \eqref{PS}, we
proceed to estimate the distance between  two consecutive minimum at $y_n$ and maximum at $x_n$. 
The domain $[y_n , x_n]$ is split to three regions:
\begin{itemize}
\item[(I)]  $u_- <  u(y_n) < u (a_n ) = u_+ - \alpha$ for $y_n \le \tau \le a_n$
\item[(II)]  $u_+ - \alpha = u(a_n) < u_+ < u(b_n) = u_+ + \beta$ for $a_n \le \tau \le b_n$
\item[(III)] $u_+ + \beta = u(b_n) < u(x_n) < u_s $ for $b_n \le \tau \le x_n$
\end{itemize}

In Region (II) we have
$$
0 < u(b_n) - u(a_n) = w(\tau_\star) (b_n - a_n ) \qquad \mbox{for some $\tau_\star \in [a_n , b_n ]$}\ .
$$
Since the orbit of $(u(\tau), w(\tau)$ is near the orbit $(u_0(\tau), w_0(\tau))$ we have $w(\tau) \ge \kappa > 0$ and we conclude
$b_n - a_n \le K$.

Consider next the Region (I) and observe that using \eqref{decay}, \eqref{defphi}, \eqref{estim1},
$$
\begin{aligned}
K n c &\le \Phi (u_-) - \Phi (u(y_n))
\\
&= ( - \phi (v) ) ( u(y_n) - u_- ) \qquad \mbox{for some $u_- < v < u_+ - \alpha$}
\\
&\le  \left ( \max_{u_- < v < u_+ - \alpha} (- \phi (v) ) \right ) ( u(y_n) - u_- ) 
\\
&\le \max_{u_- < v < u_+ - \alpha} \big (  H (v - u_-) \big ) ( u(y_n) - u_- ) 
\\
&\le K' ( u(y_n) - u_- ) 
\end{aligned}
$$
Using \eqref{problem} and \eqref{estim1},
$$
u'' + c u' = - \phi(u) > \eta (u- u_-) > \eta (u(y_n) - u_-) > K n c
$$
whence
$$
\big ( u(\tau) - u_- \big )^\prime \ge K n \big ( 1 - e^{-c (\tau - y_n )}  \big ) 
$$
and integrating once again and using $e^{-cx} \ge 1 - cx + \tfrac{1}{2} c^2 x^2 - \tfrac{1}{6} c^3 x^3$ we derive
$$
\begin{aligned}
u(\tau) - u(y_n)  &\ge  \frac{Kn}{c} \Big ( c(\tau - y_n) + e^{-c (\tau - y_n)} -1 \Big )
\\
&\ge K n c \tfrac{1}{2} (\tau - y_n)^2 \big ( 1 - \frac{1}{3} c (\tau - y_n ) \big )
\\
&\ge K n c \tfrac{1}{4} (\tau - y_n)^2   \qquad \mbox{provided $c (x_n - y_n) < \tfrac{3}{2}$}
\end{aligned}
$$
We conclude
$$
\tau - y_n \le \frac{K}{\sqrt{nc}}  \qquad \mbox{for $y_n \le \tau \le a_n$}\ .
$$

On the Region (III)  $b_n \le \tau \le x_n$ the estimate proceeds along similar lines: First using \eqref{estim2},
$$
K n c \le \Phi (u_s) - \Phi (u (x_n)) \le M (u_s - u(x_n))\ .
$$
Using \eqref{problem} and \eqref{estim2}
$$
(u_s - u)'' + c (u_s - u)' = \phi (u) \ge \min_{u \in [u_+ + \beta, u_s]} \phi (u) =: A > 0\ ,
$$
and after an integration
$$
u' (\tau) \ge \frac{K}{c} \big ( e^{c(x_n - \tau)} - 1 \big ) \qquad b_n \le \tau \le x_n\ ,
$$
and another one
$$
\begin{aligned}
u(x_n) - u(\tau) &\ge \frac{K}{c}  \int_\tau^{x_n}  \big ( e^{ c(x_n - z)} - 1 \big ) dz
\\
&\ge \frac{K}{2} (x_n - \tau)^2  \qquad \mbox{for $b_n \le \tau \le x_n$}\ .
\end{aligned}
$$
We conclude that
$$
x_n - \tau \le  \Big ( \frac{2(u(x_n) - u(\tau))}{K} \Big )^{\tfrac{1}{2}} \le K^\prime\ .
$$
Putting all together we obtain
\begin{equation}
x_n - y_n \le \frac{K}{ (nc)^{1/2}} \ ,
\end{equation}
and similarly by a symmetric argument  $y_{n+1} - x_n \le \frac{K}{ (nc)^{1/2}} $ which completes the proof.
\end{proof}

At this point we estimate the "length" of the highly oscillatory part of the solution. 
Since the energy drop per period is  of size $Kc$ the total number of oscillations is $N=K/c$. The length of this regiion is
\begin{equation}\label{length-scale}
L = \sum_{n=1}^{N=K/c} y_{n+1} - y_n 
\le \sum_{n=1}^{N=K/c} \frac{K}{(cn)^{1/2}} 
=\frac{K}{c^{1/2}} \int_1^{\tfrac{K}{c}} \frac{dx}{x^{1/2}} \le \frac{K'}{c}\ .
\end{equation}

{\it Small Energies.} For energies  $E(\tau)\in(0,E_m)$, we show the exponential damping behavior of the solution. Since $\Phi''(u) > 0$ in this region,
 the situation is analogous to the analysis in \cite{PR2007}.  Using the energy dissipation  \eqref{EC}, we obtain
$$E'(\tau)\geq -cE(\tau) \ ,
$$
and thus 
$$E(\tau)\geq E(\tau_0)e^{-c(\tau-\tau_0)} \ .$$

To show the opposite inequality note that in this region $\sup|w|  \le \sqrt{2 E(\tau_0)}$. Then Gronwall's inequality \eqref{Gr}
implies
\begin{equation}\label{GrEx}
|u_c (\tau)-u_0(\tau)|+|w_c(\tau)-w_0(\tau)|\leq c \sqrt{E(\tau_0)}e^{L(\tau-\tau_0)} \, .
\end{equation}
From the analysis of \eqref{PS0} in section \ref{sec:osc1} for small energies, the distance
between consecutive maxima satisfies
$x_n-x_{n-1} \ge \alpha > 0$ for some $\alpha > 0$ independent of $n$, and same for the minima $y_n$. 
Following analogous to the large energies case steps in \eqref{decay0}, we obtain an upper bound for the energy
$$
E(x_n)\leq E(x_{n-1}) \big(1-Kc (x_n - x_{n-1}) \big ) \; .
$$
We conclude that for $E < E_m$ the energy decays exponentially at a rate $cK$, 
$$
E(x_n) \le E(x_0) e^{-K c (x_n - x_0)} \, ,
$$
where $x_0$ is the first point of maximum such that  $E(x_0) \le E_m$. It follows that 
$u_c (\tau) \to u_+$ on a length scale of order $1/c$.

\medskip
The result that has been proved can be expressed entirely based on the second order equation \eqref{PS2} and properties  of the function $\phi(u)$.
We summarize the result in a  proposition.

\begin{proposition}\label{propsize}
Let $u_c (\tau)$ be heteroclinic connections of \eqref{PS2} with $u_c (\pm \infty) = u_\pm$ with  $0 < c < c^{\star}$,
where $\Phi(u)$ is defined in \eqref{defPhi3}, 

(i) Let $u_- < u_+$ and assume  $\phi(u)$ satisfies
\begin{equation}\label{hyppho0}
\begin{aligned}
\mbox{ $u_-$, $u_+$ are the only solutions of $\phi(u) = 0$  in $[u_-, u_s]$}
\\
0 = \Phi(u_+) < \Phi(u) < \Phi (u_-) = \Phi(u_s) = E_{max} \quad \text{for}\quad u \in (u_- , u_s)\ ,
\end{aligned}
\tag{H$_{\phi 0}$}
\end{equation}
as well as
\begin{align}
\label{hphi1}
&\phi(u) < 0  \, , \qquad u_- < u < u_+\ ,
\tag{H$_{\phi 1}$}
\\
\label{hphi2}
&\phi(u) > 0   \, , \qquad u_+ < u \le u_s\ ,
\tag{H$_{\phi 2}$}
\\
\label{hphi3}
&\phi^\prime (u_-) < 0 \, , \quad \phi^\prime (u_+) > 0\ .
\tag{H$_{\phi 3}$}
\end{align}
Let $u_c(\tau)$ be normalized by setting $u^\prime_c (0) = 0$.  Then the domain $\tau > 0$ is split into two regions:
\begin{itemize}
\item the region where the solution $u_c(\tau)$ has large amplitude oscillations with energy  $E_m < E < E_{max}$  and which has length of size $O (\tfrac{1}{c})$.
\item the region where the solution $u_c(\tau)$ has small amplitude oscilations of energy $0 < E < E_m $ which has again length of size  $O (\tfrac{1}{c})$.
\end{itemize}
\end{proposition}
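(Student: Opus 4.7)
The proposition consolidates the oscillation estimates carried out in Sections \ref{sec:osc1} and \ref{sec:osc2} into a form that depends only on the abstract hypotheses on $\phi$. I would treat \eqref{PS2} as a perturbation of its Hamiltonian skeleton at $c=0$, which is integrable via the energy $E = \tfrac{1}{2} w^2 + \Phi(u)$. Under \eqref{hyppho0}--\eqref{hphi3} the level sets in the strip $u \in [u_-,u_s]$ consist of a homoclinic at $u_-$ of energy $E_{max} = \Phi(u_-)$ enclosing a family of periodic orbits $(u_0,w_0)$. Using $w = \pm \sqrt{2(E-\Phi(u))}$ one writes the Hamiltonian period $T(E)$ as in \eqref{per}; the strict quadratic behavior of $\Phi$ at the endpoints from \eqref{hphi3}, together with the sign conditions \eqref{hphi1}--\eqref{hphi2}, yields $T(E) \ge T_0 > 0$ uniformly, with $T(E) \to \infty$ only as $E \to E_{max}$ (Lemma \ref{lem:perest}).

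Next I would count oscillations via the dissipation identity $\tfrac{d}{d\tau} E = -c w_c^2$. A Gronwall comparison on any finite time window gives $|(u_c,w_c) - (u_0,w_0)| \le c K_T$, and this forces $u_c$ to lose energy at least $Kc$ across each Hamiltonian period. Iterating across consecutive minima $y_n$ yields $E(y_n) \le E_{max} - Kcn$, so the big-energy regime $E > E_m$ accommodates at most $N \sim K/c$ oscillations before the solution drops below $E_m$. This is the bookkeeping step and reproduces the content of Lemma \ref{LNE}.

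The main technical point, and the main obstacle, is the length estimate in the big-energy regime: since $T(E) \to \infty$ as $E \to E_{max}$ precisely in the regime that matters, one cannot multiply period by number of cycles. Instead I would estimate $x_n - y_n$ directly from the ODE by splitting $[y_n, x_n]$ into three zones $[y_n, a_n]$, $[a_n, b_n]$, $[b_n, x_n]$ corresponding to $u \in (u_-, u_+ - \alpha]$, $[u_+ -\alpha, u_+ + \beta]$, $[u_+ + \beta, u_s]$. On the middle zone the Gronwall comparison with $w_0$ bounded away from $0$ gives an $O(1)$ bound. On the left zone, the linear lower bound $-\phi(u) \ge \eta(u-u_-)$ from \eqref{hphi1} together with $u(y_n) - u_- \ge Kcn$ (from the energy drop) feeds into $u'' + cu' \ge Kcn$; integrating twice from $\tau = y_n$ where $u'(y_n) = 0$ and expanding $1 - e^{-c\tau}$ yields $u(\tau) - u(y_n) \ge \tfrac{Kcn}{4}(\tau-y_n)^2$, hence $\tau - y_n \le K/(cn)^{1/2}$. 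The right zone is handled symmetrically using \eqref{hphi2} near $u_s$. Summing gives
$$
L_{\text{big}} \;\le\; \sum_{n=1}^{K/c} \frac{K}{(cn)^{1/2}} \;\le\; \frac{K'}{c},
$$
which is the desired $O(1/c)$ length.

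Finally, in the small-energy regime $\Phi$ is strictly convex near $u_+$ by $\phi'(u_+) > 0$, so $u_c$ is a damped oscillation about $u_+$. The inequality $E' \ge -cE$, together with the upper bound obtained by reapplying Gronwall with $\sup|w_c| \le \sqrt{2 E(\tau_0)}$, gives matching two-sided exponential bounds on $E$, so the tail reaches any preassigned neighborhood of $(u_+, 0)$ on a time scale $O(1/c)$. The nondegeneracy \eqref{hphi3} is essential throughout: weakening $\phi'(u_\pm) \ne 0$ would destroy the quadratic control of $\Phi$ at the endpoints, kill the $(cn)^{-1/2}$ bound, and with it the $O(1/c)$ length scale -- this is the abstract counterpart of excluding contact discontinuities in the elasticity setting.
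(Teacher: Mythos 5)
Your proposal follows essentially the same route as the paper: the lower bound $T(E)\ge T_0$ on Hamiltonian periods, the Gronwall comparison giving an energy drop of order $c$ per cycle and hence $E(y_n)\le E_{max}-Kcn$, the three-zone decomposition of $[y_n,x_n]$ yielding $x_n-y_n\le K/(cn)^{1/2}$, the summation $\sum_{n\le K/c} K/(cn)^{1/2}\le K'/c$, and the two-sided exponential energy bounds in the small-energy regime. The argument is correct and matches the paper's proof (Sections \ref{sec:osc1}--\ref{sec:osc2}) in all essentials, including the observation that the nondegeneracy \eqref{hphi3} is what rules out the degenerate (contact-discontinuity) endpoints.
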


One can easily check that \eqref{hphi1}, \eqref{hphi2}, \eqref{hphi3} imply that  and $\phi(u)$ satisfies \eqref{hypphi1} and \eqref{hypphi2} 
with $\alpha, \beta$ as defined in \eqref{laxcon}, which are the key ingredients on which the analysis of section \ref{sec:main} is based.

An analogous result can be proved for the case $u_- > u_+$  and $c > 0$ under the hypotheses that $\Phi$ has a maximum at $u_-$,
a (nondegenerate) minimum at $u_+$ and $\phi(u)$ satisfies conditions analogous to  \eqref{hypphi1}, \eqref{hypphi2} in the rest
of the domain $[u_s , u_-]$ where $\Phi(u_s) = \Phi(u_-)$.  These two results provide, by performing a change of direction $\tau \to - \tau$,
two analogous results valid for the case that the shock speed $s < 0$.

\subsection{Returning to the original variables}\label{sec:return}

Consider now the convergence of traveling waves for \eqref{EDDI} as $\eps, \delta \to 0$.
Recall the traveling wave is expressed via
\begin{equation}
\begin{aligned}
u^{\eps,\delta} (x,t) &= U^{\eps, \delta} \left ( x-st \right) = u_c \left ( \frac{x-st}{\sqrt{\delta}} \right) \; , 
\\[5pt]
v^{\eps,\delta} (x,t) &= V^{\eps, \delta} \left ( x-st \right) = v_c \left ( \frac{x-st}{\sqrt{\delta}} \right ) \, ,
\end{aligned}
\end{equation}
where $u_c$ solves \eqref{problem},  $v_c$ is determined by \eqref{determinev} and $c = s \tfrac{\eps}{\sqrt{\delta}}$ (and here we consider the case that $s>0$).

We fix the shift of the traveling wave so that $\theta = 0$ at the first maximum of the function $u_c (\tau)$. Since $U^{\eps, \delta}$ and $u_c$ are related through
the scaling
\begin{equation}
U^{\eps, \delta} (\theta) = u_c \left ( \frac{\theta}{\sqrt{\delta}} \right )\ ,
\end{equation}
the graph of $U^{\eps, \delta}$ is obtained from the graph of $u_c$ by scaling down in the axis direction by a factor $\sqrt{\delta}$.
Accordingly, a structure of length scale $L$ in the graph of $u_c$ will have length scale $\sqrt{\delta} L$ in the graph of $U^{\eps, \delta}$.

We have seen that $u_c (\tau) \to u_\pm$ as $\tau \to \pm \infty$, and from the analysis leading to \eqref{length-scale} the
region of oscillations at the high energy regime is  of order $\frac{1}{c}$. In the regime of small energies the length scale
of oscillations is again of order $\frac{1}{c}$.  When we transfer these length scales at the level of the original variables, they both become
of size $O(\sqrt{\delta} \frac{1}{c} )= O ( \delta / \eps)$ and they will shrink to zero provided that $\delta = o(\eps)$.
This finishes the proof of the main theorem. 

\subsection{Is $\delta= o(\eps)$ optimal ? }\label{sec:linsystem}

The range $\delta = o(\eps)$ is optimal for the linearized system associated with  \eqref{problem}. The linearized
equation around $u_+$ takes the form
$$
\frac{d^2 \tilde u}{d\tau^2} + c \frac{d\tilde u}{d\tau} + \alpha \tilde u = 0\ ,
$$
with $c = \tfrac{s \eps}{\sqrt{\delta}} $,  $\alpha = \phi (u_+) > 0$ and $\tilde u = u - u_+$. The characteristic polynomial for the linear differential equation
has  complex eigenvalues when $c \ll 1$ which are 
$\rho_{\pm} = - \frac{c}{2} \pm \frac{i}{2} \sqrt{ |4 \alpha - c^2|}$, 
and its solution is 
$$
\tilde u (\tau) =  A e^{ -\tfrac{c}{2} \tau } \cos ( \omega \tau + \beta)\ ,
$$
where $A$ is an amplitude, $\omega = \sqrt{\alpha - \tfrac{c^2}{4}} $ the frequency and $\beta$ a phase shift.
When expressing the solution in terms of the original variables we have
$$
 u^{\eps, \delta} \left ( \frac{x- s t}{\sqrt{\delta}} \right ) - u_+  =  A  e^{-  \tfrac{\eps}{ \delta} \tfrac{s}{2} (x - s t)} \cos \left (  \left( \sqrt{\alpha - \tfrac{s^2 \eps^2}{4\delta}} \right )
 \frac{x-st}{\sqrt{\delta}} + \beta \right )\ .
$$
In the regime of moderate dispersion  $\delta = o(\eps)$ the right side converges to zero as $\delta, \eps \to 0$ and the traveling wave converges to a shock.

In the moderate dispersion regime $\delta = o(\eps)$, the convergence to a shock wave is in a strong sense as often expected in shock wave theory.
In the complementary region  $\eps = o(\delta)$ the solution of the linearized equation oscillates vigorously around the constant state $u_+$.
Such an oscillatory tail converges to a constant state in a weak sense since the average of the oscillations around the constant $u_+$
cancel out. 
This behavior characterizes  only the linearized problem and it is not clear if it will persist for the nonlinear problem.
Weak convergence could conceivably give a meaning on how the limiting state $u_+$ is achieved even in a regime of strong dispersion. 


\section{Dispersive Shocks in Quantum Hydrodynamics with Viscosity}
\label{sec:qhd}

We consider the one dimensional quantum hydrodynamics system (QHD) with artificial viscosity
\begin{equation}
\label{QHDDD}
\begin{aligned}
\rho_t+j_x &=0\ ,  
 \\
 j_t+\left(\frac{j^2}{\rho}+\rho^\gamma\right)_x &= \eps j_{xx}+\delta\rho \left(\frac{\sqrt{\rho}_{xx}}{\sqrt{\rho}}\right)_x \ ,
\end{aligned}
\end{equation}
where $\rho$ is the density, $u$ the fluid velocity, $p(\rho)$ the pressure, and $j$ the fluid momentum, $j=\rho u$.
The Bohm's potential, also known as Quantum potential, represents a  dispersive term and artificial viscosity is also introduced in this system 
modeling effects of dissipation.  Our objective is to show that the combined effect of diffusion and dispersion leads in the moderate dispersion
regime to a dispersive shock wave with oscillatory tails.
Traveling wave solutions of \eqref{QHDDD} have been studied in \cite{LMZ2020} in a weak dispersion regime  $\delta=\eps^2$, 
where  existence of traveling waves and convergence to a shock when $\delta=\eps^2 \to 0$ is proved.

At first sight, the nonlinearities and dispersion in the system \eqref{QHDDD} appear  more complex than in the system \eqref{EDD}, but casting the problem in the right variables will
 transform the traveling wave analysis to examining a Hamiltonian system with weak friction   \eqref{PS}.

\subsection{Shocks in gas dynamics}\label{sec:shockgd}

 The system of isentropic gas dynamics 
\begin{equation}
\label{HYP}
\begin{aligned}
\rho_t+ (\rho u)_x &= 0\ ,   
\\
 (\rho u)_t+( \rho u^2 + p(\rho) )_x &= 0\ ,
\end{aligned}
\end{equation}
is hyperbolic when $p'(\rho) > 0$ and has eigenvalues $\lambda_\pm = u \pm \sqrt{ p^\prime (\rho)}$ and corresponding
right eigenvectors $r_\pm = \big ( \pm \rho, \sqrt{ p^\prime (\rho)} \big )^T$. Under the condition 
\begin{equation}
\label{gneuler}
\big ( \rho^2 p^\prime(\rho) \big )^\prime  > 0\ ,
\end{equation}
both characteristic fields are genuinely nonlinear $r_\pm \cdot \nabla \lambda_\pm > 0 $.

Shock waves are discontinuous solutions of \eqref{HYP} connecting two states $(\rho_- , u_- )$ to $(\rho_+ , u_+)$.
They have been studied extensively, {\it e.g.} \cite[Ch 18]{Smoller}. Shocks are constructed by solving the Rankine-Hugoniot conditions
\begin{equation}\label{RH}
\begin{aligned}
- s [\rho]  + [\rho u] &= 0 
\\
- s   [\rho u] + [\rho u^2 + p] &= 0
\end{aligned}
\end{equation}
where $s$ is the shock speed, and we use the usual notation $[\rho] = \rho_+ - \rho_-$ etc. Introduce $m = \rho u - s u$ and note that
$$
[m] = 0 \quad \text{and}\quad m [u] + [p] = 0\ .
$$
This implies that $m$ stays constant across the shock
\begin{equation}\label{massflux}
\rho_+ ( u_+ - s) = \rho_- ( u_- -s) = :m \, , \quad  m = - \frac{p_+ - p_-}{u_+ - u_-} \ , 
\end{equation}
where $p_+ = p(\rho_+)$, $p_- = p(\rho_-)$ and $m$ is computed by 
\begin{equation}\label{eqm2}
m^2 = - \frac{ p_+  - p_- }{ \frac{1}{\rho_+} - \frac{1}{\rho_-}} \ .
\end{equation}

A 1-shock associated to the $\lambda_-$ characteristic speed satisfies the Lax shock condition when
$$
u_+ - \sqrt{p'(\rho_+)} < s < u_- - \sqrt{p'(\rho_+)} \ .
$$
If \eqref{gneuler} is satisfied using \eqref{massflux} one checks
$$
\sqrt{ \rho_+^2 p'(\rho_+)} > m > \sqrt{ \rho_-^2 p'(\rho_-)} > 0\ ,
$$
and we deduce that for a Lax admissible 1-shock we have
$$
\rho_+ > \rho_- \, , \quad m > 0 \, , \quad s < u_+ < u_- \ .
$$

A 2-shock associated to the $\lambda_+$ characteristic speed satisfies the Lax shock condition when 
$$
u_+ + \sqrt{p'(\rho_+)} < s < u_- + \sqrt{p'(\rho_+)} \ .
$$
In a similar way, using \eqref{gneuler}, we deduce
$$
\rho_+  <  \rho_- \, , \quad m <  0 \, , \quad  u_+ < u_- < s \ .
$$

The systems \eqref{EL} and \eqref{HYP} are equivalent 
by the transformation $\hat y (\cdot, t) : x \to y$ from Lagrangian to Eulerian coordinates, \cite{Smoller}. 
Indeed, using $y = \hat y(x,t)$, $v = \frac{\del \hat y}{\del t}$ and $ w = \frac{\del \hat y}{\del x}$, the equations
$$
w_t = v_x  \, , \quad  v_t = \sigma(w)_x\ ,
$$
express respectively the equality of mixed partial derivatives and the balance of momentum in Lagrangian coordinates. The balance of mass takes the form
$\rho (\hat y(x,t), t) = \frac{1}{w}(x,t)$ and may be viewed as defining the density. The velocity in Eulerian coordinates $u (y,t)$ relates to the Lagrangian velocity $v(x,t)$ 
through $v (x,t) = \hat u (\hat y (x,t), t)$. Using these formulas one 
checks that $(\rho ,  u )(y,t)$ satisfy the system \eqref{HYP} with the identification for the pressure
\begin{equation}\label{chvar}
p(\rho) = - \sigma \left ( \frac{1}{\rho} \right )  \, , \quad w = \frac{1}{\rho}\ .
\end{equation}
The Rankine-Hugoniot conditions, the equations for the shock curves, and the Lax shock-admissibility conditions in Lagrangian and Eulerian coordinates transform to each other.
In particular, the condition \eqref{gneuler} in Eulerian coordinates corresponds to the condition $\sigma^{\prime \prime} (w) < 0$ 
for the Lagrangian counterpart.

\subsection{Reduction of traveling waves  to a Hamiltonian system with friction}

Consider \eqref{QHDDD} and introduce for its solution $( \rho^{\eps, \delta} , j^{\eps, \delta})$  the {\em ansatz} of traveling waves,
$$
\begin{aligned}
\rho^{\eps, \delta}  (x,t)= \rho  \left ( (x-st)/\sqrt{\delta} \right ) = \rho (\tau) \, , 
\\
j^{\eps, \delta} (x,t)= j \left (  (x-st)/\sqrt{\delta} \right )=  j (\tau)  \, , 
\end{aligned}
$$
where $j = \rho u$, $\tau = (x-st)/\sqrt{\delta}$ and (with a slight abuse of notation) we retain the notation $(\rho (\tau), j (\tau))$ for the solution
of the traveling wave equations
\begin{equation}\label{twsystem}
\begin{aligned}
- s \rho^\prime + j^\prime &= 0 \ ,
\\
-s j^\prime + \left( \frac{j^2}{\rho} + p(\rho) \right)^\prime &= \frac{\eps}{\sqrt{\delta}} j^{\prime \prime} + \rho \left (  \frac{  (\sqrt{ \rho})^{\prime\prime}}{\sqrt{\rho}} \right )^\prime\ .
\end{aligned}
\end{equation}

Next, we fix $(\rho_-,  u_-)$, $(\rho_+, u_+)$ and $s$ that satisfy the Rankine-Hugoniot conditions \eqref{RH}. The first equation in \eqref{twsystem} gives 
\begin{equation}\label{massflux1}
\rho_- (u_- - s) = \rho (u - s) = m\ ,
\end{equation}
where the constant mass flux (relative to the shock) $m$  is computed via \eqref{eqm2}.
Using the well known formula
$$
\rho \left(\frac{\sqrt{\rho}_{xx}}{\sqrt{\rho}} \right)_x=\frac{1}{2} \left( \rho(ln\rho)_{xx} \right)_x\ ,
$$
for the Bohm potential, we integrate \eqref{twsystem} and obtain
\begin{equation}\label{twqhd}
\begin{aligned}
-s (\rho - \rho_-) + (\rho u - \rho_- u_- ) &= 0\ ,
\\
-s^2 \rho^\prime + \big ( \rho u^2 - \rho_- u_-^2 + p(\rho - p(\rho_-) \big ) &= \frac{s \eps}{\sqrt{\delta}} \rho^{\prime\prime} +  \frac{1}{2} \left ( \rho(ln\rho)^{\prime \prime} \right )^\prime
\ .
\end{aligned}
\end{equation}
In turn, setting $c = \frac{s \eps}{\sqrt{\delta}}$ and using \eqref{RH}--\eqref{eqm2} we arrive at
\begin{equation}\label{basic}
\frac{1}{2} \rho \left(  \ln \rho \right)^{\prime\prime} + c \rho^\prime - \left( p(\rho )- p(\rho_-) + m^2 \left ( \frac{1}{\rho} - \frac{1}{\rho_-} \right ) \right) = 0\ .
\end{equation}

Setting $x = \ln \rho$ we see that $x(\tau)$ solves the equation
\begin{equation}\label{secondorder}
x'' + 2 c x' + \psi(x) = 0\ ,
\end{equation}
where the function $\psi(x)$ can be expressed in the following equivalent forms
\begin{align}
\psi(x) &= -  \frac{2}{\rho} \left( p(\rho )- p(\rho_-) + m^2 \left ( \frac{1}{\rho} - \frac{1}{\rho_-} \right ) \right) \Bigg |_{\rho = e^x}
\nonumber
\\
&= 2 w \left ( \sigma (w) - \sigma (w_-) - m^2 (w - w_-) \right ) \bigg |_{w = e^{-x}}\ ,
\label{formpsi}
\end{align}
where we used the changes of variables $\rho = e^x$ for the first identity, and the formula \eqref{chvar} and $w_- =  \frac{1}{\rho_-}$
for the second. This allows to write $\psi(x)$ in the form $\psi(x) = g (e^x) = f(e^{-x})$ where
$$
f(w) = 2 w \big ( \sigma (w) - \sigma (w_-) - m^2 (w - w_-) \big )\ .
$$
The potential function $\Psi(x)$ is defined up to an arbitrary constant via
$$
\begin{aligned}
& \Psi(x) := -2 \int_{w_-}^w \big ( \sigma (z) - \sigma (w_-) - m^2 (z - w_-) \big ) \, dz \Bigg |_{w = e^{-x}} + \text{\em Const.}
\\
& \frac{d \Psi}{dx} (x) = f(e^{-x}) = \psi (x)\ .
\end{aligned}
$$

\subsection{Convergence from oscillating traveling waves to shocks} 
We established that the traveling wave problem \eqref{twqhd} reduces to solving \eqref{secondorder} for $x(\tau)$
and then defining
$$
\rho(\tau) = e^{x(\tau)} \, , \quad u(\tau) = s +  \frac{m }{\rho(\tau)}
$$
The aim is to apply Proposition \ref{propsize} to \eqref{secondorder}. The hypotheses can in principle be checked for the following reasons:
The genuine nonlinearity hypothesis for $\sigma(w)$ suggests that $f(w)$ has a sign between the roots $f(w_-) =  f(w_+) = 0$. Moreover,
$$
\frac{df}{dw} (w) = 2 \big ( \sigma (w) - \sigma (w_-) - m^2 (w - w_-) \big ) + 2w ( \sigma^\prime (w) - m^2)
$$
$$
\frac{df}{dw} (w_\pm ) = 2w_\pm ( \sigma^\prime (w_\pm) - m^2)  \, , 
$$
and since $w > 0$ the sign of $\frac{df}{dw} (w_\pm )$ amounts to  the Lax shock conditions.

We present the details for a 2-shock that satisfies the Lax conditions. 
Then $\rho_+  <  \rho_- $, $m <  0$ and $u_+ < u_- < s$. The system \eqref{QHDDD} is invariant under the change of variables
$$
\hat x = x - \kappa t \, \quad \hat u = u + \kappa \, , \quad \hat \rho = \rho  \, , \quad \mbox{for $\kappa \in \R$}.
$$
Therefore by a change of variables we may assume $u_+ = 0 < u_- < s$. This amounts to observing the flow from a coordinate system 
moving with the velocity of the fluid at $\infty$. The values $\rho_+ < \rho_-$, $m < 0$ remain unchanged.

Next, we employ the change of variables  $x = \ln \rho$ and proceed to verify  the hypotheses of Proposition \ref{propsize} 
for the arrangement $x_s < x_+ < x_-$. Then $x(\tau)$ satisfies \eqref{secondorder} with $\psi(x)$ given by \eqref{formpsi}. 
Note that $w_- = \tfrac{1}{\rho_-} < \tfrac{1}{\rho_+} = w_+$. Since $\sigma''(w) < 0$ we have 
$$
f(w) = 2 w \big ( \sigma (w) - \sigma (w_-) - m^2 (w - w_-) \big ) \, , \quad w \in (w_- , w_+)\ ,
$$
and $f(w) > 0$ on $(0, \infty) - (w_- , w_+)$. Moreover, 
$$
\frac{df}{dw} (w_-) = 2w_- ( \sigma^\prime (w_-) - m^2) > 0 \quad\text{and}\quad \frac{df}{dw} (w_+ ) = 2w_+ ( \sigma^\prime (w_+) - m^2) < 0 \, .
$$
All hypotheses of Proposition \ref{propsize}  are fulfilled for the arrangement $x_s  < x_+ < x_-$ with a maximum at $x_-$
and minimum at $x_+$.  Proceeding as in section \ref{sec:return} we have:

\begin{theorem}
\label{TQHD}
Let $p(\rho)$ satisfy \eqref{gneuler} and suppose that $s$, $(\rho_-, u_-)$, $(\rho_+, u_+)$ define a 1-shock (or a 2-shock) that satisfies the Lax shock conditions.
There exist a unique (up to shifts) traveling wave solution $(\rho^{\eps,\delta}, (\rho u)^{\eps,\delta} )(x-st)$ to the system \eqref{QHDDD} that connects state $(\rho_-, u_-)$  to 
$(\rho_+,u_+)$. When the shift is appropriately selected,  the traveling wave converges strongly as $\eps$, $\delta \to 0$ with $\delta = o(\eps)$ 
to the Lax-shock solution of \eqref{HYP}. 
\end{theorem}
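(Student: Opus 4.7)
The plan is to apply Proposition \ref{propsize} to the reduced scalar equation \eqref{secondorder} that governs $x(\tau) = \ln \rho(\tau)$, and then undo the scaling $\tau = (x - st)/\sqrt{\delta}$ exactly as in Section \ref{sec:return}. The authors have already performed the key reduction of \eqref{twqhd} to $x'' + 2c x' + \psi(x) = 0$ with $c = s\eps/\sqrt{\delta}$ and $\psi(x) = f(e^{-x})$ as in \eqref{formpsi}. What remains is three separate tasks: (i) verify the hypotheses of Proposition \ref{propsize} for $\psi$ on the whole interval $[x_s, x_-]$; (ii) produce the heteroclinic orbit and check its uniqueness up to translation; (iii) transfer the length-of-oscillations estimate to the physical variable $x - st$.

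For (i), concentrate first on a Lax-admissible 2-shock and use the invariance $x \mapsto x - \kappa t$, $u \mapsto u + \kappa$ of \eqref{QHDDD} to normalize $u_+ = 0 < u_- < s$, so that $\rho_+ < \rho_-$, $m < 0$, and equivalently $w_- = 1/\rho_- < 1/\rho_+ = w_+$. The genuine nonlinearity \eqref{gneuler} becomes $\sigma''(w) < 0$ via \eqref{chvar}, and the Lax conditions yield $\sigma'(w_-) > m^2 > \sigma'(w_+)$. The bracket inside $f(w) = 2w\big(\sigma(w) - \sigma(w_-) - m^2(w - w_-)\big)$ is the difference between the strictly concave graph of $\sigma$ and its secant through $(w_\pm, \sigma(w_\pm))$, hence strictly positive on $(w_-, w_+)$ and strictly negative on $(w_+, \infty)$ with linear growth at infinity. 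Translating back through $x = -\ln w$ flips intervals and signs and yields exactly \eqref{hphi1}--\eqref{hphi3}, with a strict maximum of $\Psi$ at $x_- = \ln \rho_-$ and a strict minimum at $x_+ = \ln \rho_+$; the growth of $f$ at infinity produces a unique $x_s < x_+$ with $\Psi(x_s) = \Psi(x_-)$, which supplies \eqref{hyppho0}. The endpoint signs $\psi'(x_-) < 0 < \psi'(x_+)$ are precisely the strict Lax inequalities.

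For (ii), I would reproduce the phase-plane argument of Theorem \ref{thm:existence}: the planar system associated to \eqref{secondorder} has a saddle at $(x_-, 0)$ with unstable direction pointing into the Hamiltonian invariant region $\cD$ cut out by the homoclinic of the $c = 0$ flow; the identity $\tfrac{d}{d\tau}\big(\tfrac{1}{2}(x')^2 + \Psi(x)\big) = -2c(x')^2 \le 0$ makes $\cD$ positively invariant; and below the weak-friction threshold $2c < 2\sqrt{\psi'(x_+)}$ (which is the content of the moderate dispersion regime) the equilibrium $(x_+, 0)$ is an attracting spiral, so Poincaré--Bendixson forces the forward orbit to land on it, producing the unique connection up to shifts. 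Proposition \ref{propsize} then bounds the length of both the large- and small-energy portions of the oscillatory tail by $O(1/c)$ in the $\tau$ variable. Passing to the physical variable compresses this length by $\sqrt{\delta}$, so the support of the nontrivial transition is $O(\sqrt{\delta}/c) = O(\delta/\eps)$, which collapses to a point exactly when $\delta = o(\eps)$. Defining $\rho^{\eps,\delta}(x,t) = \exp\big(x((x-st)/\sqrt{\delta})\big)$ and recovering $j^{\eps,\delta}$ from \eqref{massflux1} by $j^{\eps,\delta} = s\rho^{\eps,\delta} + m$, the profile converges strongly (uniformly on compact sets away from $\{x = st\}$, and in $L^1_{\mathrm{loc}}$) to the Lax 2-shock of \eqref{HYP}. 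The 1-shock case follows either by the symmetry $\tau \mapsto -\tau$ noted after Proposition \ref{propsize}, or by repeating the argument with $u_- < u_+$, $m > 0$, $w_+ < w_-$ and the corresponding reversal of extremal positions of $\Psi$.

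The main obstacle I anticipate is not a single sharp estimate but the bookkeeping in step (i), because $\psi$ mixes the exponential change of variables with $\sigma$ and with the weight $2w$, so one must ensure that passing from the clean secant-versus-graph picture of $f(w)$ on $[w_-, w_+]$ to the variable $x = -\ln w$ on $[x_+, x_-]$, and from the tail behavior of $f$ for $w > w_+$ to the tail behavior of $\psi$ for $x < x_+$, does not introduce spurious critical points of $\Psi$. Once the global monotonicity structure of $\Psi$ on $[x_s, x_-]$ is pinned down from the concavity of $\sigma$ alone, the rest is a direct application of the machinery of Sections \ref{sec:main} and \ref{sec:return}, and no new analytic idea beyond what has already been developed for the elasticity system should be required.
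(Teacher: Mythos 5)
Your proposal is correct and follows essentially the same route as the paper: verify the hypotheses of Proposition \ref{propsize} for $\psi(x)=f(e^{-x})$ using the concavity $\sigma''(w)<0$ (equivalent to \eqref{gneuler} via \eqref{chvar}) and the Lax inequalities $\sigma'(w_-)>m^2>\sigma'(w_+)$, run the phase-plane existence argument, and rescale as in Section \ref{sec:return} to get a transition layer of width $O(\sqrt{\delta}/c)=O(\delta/\eps)$. One cosmetic slip: the substitution $x=-\ln w$ reverses intervals but \emph{not} signs (since $\psi(x)=f(w)$ pointwise), so one lands in the mirror-image arrangement $x_s<x_+<x_-$ discussed after Proposition \ref{propsize} rather than literally in \eqref{hphi1}--\eqref{hphi3}; your subsequent description of $\Psi$ (maximum at $x_-$, minimum at $x_+$) is nonetheless the correct one and matches the paper.
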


As an illustration, we present in Figure \ref{fig:PSqhd} a numerical solution to \eqref{basic} for $p(\rho) = \rho^\gamma$
with $\gamma=1.4$, between states $\rho_-=1.5$, $\rho_+=1$, with the parameter $c=0.02$. 

\begin{figure}[ht!] \centering 
\includegraphics[width=\textwidth,clip,trim=0pt 0pt 0pt 0pt]{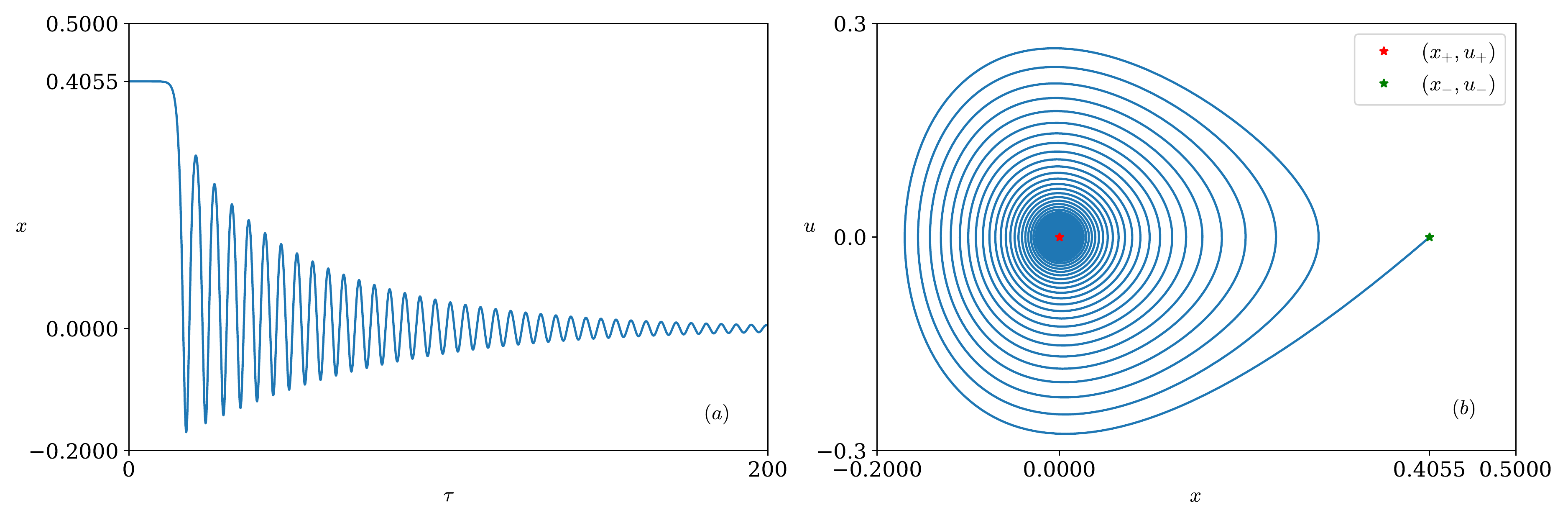}
\caption{(a) Solution $x=\log(\rho)$; (b) Phase portrait ($x$,$u$)  ($c=0.02$, $\gamma=1.4$, $\rho_-=1.5$, $\rho_+=1$) }
\label{fig:PSqhd}
\end{figure} 


\section{Undular bores in a Boussinesq-Peregrine system}
\label{sec:pbs}
Undular bores are structures observed on free surface flows that propagate mainly in one direction.  They have been described in a weakly dispersive and weakly nonlinear 
asymptotic regime by the Korteweg-de Vries-Burgers (KdVB) equation. Recently, it was shown that Peregrine's system \cite{peregrine1967} with weak dissipation 
can also describe undular bores as traveling wave solutions with high accuracy \cite{BMT2022}. Note that Peregrine's theory was initiated for the study of solitary waves and also for the study of undular bores \cite{Pere1966}. 

To illustrate consider a dissipative Peregrine-Boussinesq system written in nondimensional unscaled form
\begin{equation}\label{eq:boussinesqs2b} 
    \begin{aligned}
        &\eta_t + u_x + (\eta u)_x = 0\ ,\\
        &u_t + \eta_x + uu_x - \delta u_{xxt} - \eps u_{xx} = 0\ ,
    \end{aligned}
\end{equation}
where $\eta$ denotes the free-surface elevation above the rest position $\eta=0$, $u$ is the horizontal velocity of the fluid evaluated at some depth $\theta$ above the horizontal bottom located at depth $\theta=-1$, while $\delta, \eps>0$. This system is a dispersive/dissipative extension of the nonlinear shallow-water wave equations (also known as St. Venant equations). The latter form a system of  hyperbolic conservation laws derived as a low-order approximation of  the Euler equations for water wave theory \cite{whitham2011linear}.

Here, we will consider traveling wave solutions of \eqref{eq:boussinesqs2b} propagating with speed $s>1$ to the right. The symmetry $s\to -s$, $\eta\to \eta$, $u\to -u$ implies that anything true for traveling waves with $s>1$ is also valid for traveling waves with $s<-1$. The existence of traveling wave solutions to \eqref{eq:boussinesqs2b} was established in \cite{BMT2022} describe undular bores when
$\eps^2<4\delta s \alpha(s)$ and regularized shock waves when $\eps^2\geq 4\delta s \alpha(s)$, where
$\alpha(s)=\frac{s-\sqrt{s^2+8}}{2}+\frac{4s}{(s-\sqrt{s^2+8})^2}$.
Here we verify that if $\delta=o(\eps)$ as $\eps\to 0$, even in the regime $\eps^2<4\delta s\alpha(s)$, these traveling waves tend to a classical shock waves of the 
shallow water equations. 

In order to apply the previous theory, consider the {\em ansatz} 
$$
\eta^{\eps, \delta}(x,t) = -\eta(\tau), \quad u^{\eps, \delta}(x,t) =  u(\tau), \quad \tau=-\frac{x-st}{\sqrt{s\delta}}\ .
$$
and assume for simplicity that $\lim_{\tau\to-\infty}(\eta,u)=(0,0)$ and $\lim_{\tau\to+\infty}(\eta,u)=(\eta_+,u_+)$. The Rankine-Hugoniot conditions dictate (see \cite{BMT2022})
$$u_+=\frac{3s-\sqrt{s^2+8}}{2}<s\ .$$
After integration over $(-\infty, \tau)$ the system \eqref{eq:boussinesqs2b} yields
\begin{equation}\label{eq:redsysa}
-s\eta-u+\eta u=0, \quad su+\eta-\frac{1}{2}u^2-u''-\frac{\eps}{\sqrt{s\delta}}u'=0\ ,
\end{equation}

Eliminating the unknown $\eta$ in \eqref{eq:redsysa} we obtain the second-order equation
\begin{equation}\label{eq:newsecordeq}
u''+cu'+\phi(u)=0\ ,
\end{equation}
where $c=\eps/\sqrt{s\delta}$ and 
\begin{equation}\label{eq:newpotent}
\phi(u)=-su+\frac{u}{s-u}+\frac{1}{2}u^2\ .
\end{equation}
The potential energy 
$$\Phi(u)=\int_0^u\phi(z)dz=\frac{1}{6}u^3-\frac{s}{2}u^2-u+s\ln \frac{s}{s-u}\ ,$$
has an inflection point at $u_c=s-\sqrt[3]{s}$ a maximum at $(0,0)$ and a minimum $(u_+,\Phi(u_+))$, with $u_+>u_c=s-\sqrt[3]{s}$.
One checks that it satisfies \eqref{hypphi1}--\eqref{hypphi2}, and that $\Phi''(u_-)=\phi'(0)=(1-s)(1+s)/s<0$ for $s>1$, 
while $\Phi''(u_+)=\frac{(u_+-s)^3+s}{(u_+-s)^2}>0$ holds since $u_+>s-\sqrt[3]{s}$ for $s>1$. The graph of $\Phi(u)$ is depicted in Figure \ref{fig:potbous}. 
\begin{figure}[ht!] \centering 
\includegraphics[width=\textwidth,clip,trim=0pt 0pt 0pt 0pt]{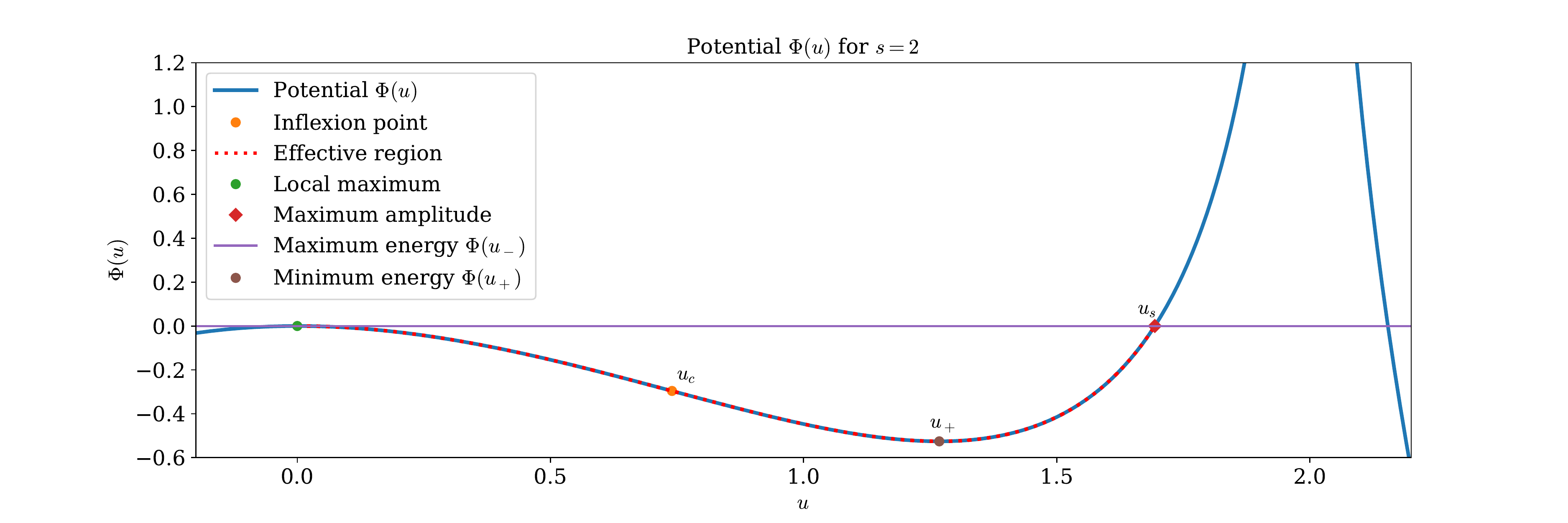}
\caption{The potential $\Phi(u)$ for $s=2$}
\label{fig:potbous}
\end{figure}

Proposition \ref{propsize} in Section \ref{sec:main} may be applied directly to \eqref{eq:newsecordeq} with $c=\eps/\sqrt{s\delta}$. 
It shows that traveling wave solutions of \eqref{eq:boussinesqs2b} tend to entropic shocks of the shallow water wave equations when $\delta=o(\eps)$ 
as $\eps,\delta \to 0$. Figure \ref{fig:convergence} shows the convergence of a dissipative-dispersive shock wave to a classical shock wave obtained numerically by taking $\delta=\eps^{1.5}$ and $s=2$ as $\eps\to 0$. We observe that as $\delta$ becomes smaller the interval where the oscillations are extended becomes smaller as well. The wave-front becomes steeper tending in the limit to a shock. In all cases the quantity $\eps^2-4\delta s\alpha(s)$ remained negative even if it was very small. 

\begin{figure}[hbp] \centering 
\includegraphics[width=\textwidth,clip,trim=0 0 0 0]{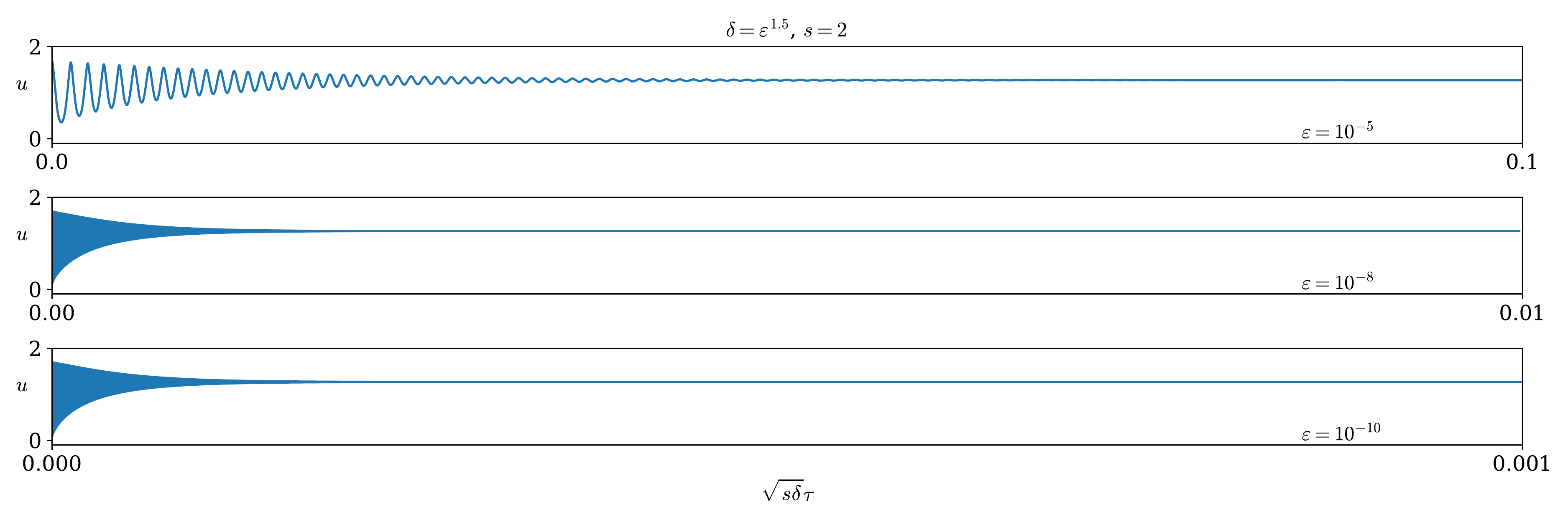}
\caption{Convergence of a dissipative-dispersive shock wave to a classical shock wave of the shallow water wave equations when $\delta=o(\eps)$ as $\delta,\eps \to 0$. (The horizontal axis scales vary between images while the maximum of the traveling waves is at $\tau=0$.)}
\label{fig:convergence}
\end{figure}

\section*{Acknowledgments}

DM thanks KAUST for their hospitality during a visit when this work was initiated.

%
%
%

%
%
%
%


\end{document}